\newtheorem{thm}{Theorem}[section]
\newtheorem{prop}[thm]{Proposition}
\newtheorem{cor}[thm]{Corollary}
\numberwithin{equation}{section}
\renewcommand{\thefootnote}{}
\begin{document}

\begin{center}
{\large\bf $q$-Supercongruences from Jackson's $_8\phi_7$ summation
and Watson's $_8\phi_7$ transformation
 \footnote{The work is supported by the National Natural Science Foundation of China (No. 12071103).}}
\end{center}

\renewcommand{\thefootnote}{$\dagger$}

\vskip 2mm \centerline{Chuanan Wei}
\begin{center}
{School of Biomedical Information and Engineering,\\ Hainan Medical University, Haikou 571199, China
\\Email address: weichuanan78@163.com}
\end{center}


\vskip 0.7cm \noindent{\bf Abstract.} $q$-Supercongruences modulo
 the fifth and sixth powers of a cyclotomic polynomial are very rare in the literature.
 In this paper, we establish
some $q$-supercongruences modulo the fifth and sixth powers of a
cyclotomic polynomial in terms of Jackson's $_8\phi_7$ summation,
Watson's $_8\phi_7$ transformation, the creative microscoping method
recently introduced by Guo and Zudilin, and the Chinese remainder
theorem for coprime polynomials. More concretely, we give a
$q$-analogue of a nice formula due to Long and Ramakrishna [Adv.
Math. 290 (2016), 773--808] and two $q$-supercongruences involving
double series.

\vskip 3mm \noindent {\it Keywords}: $q$-supercongruence; creative
microscoping method; Chinese remainder theorem for coprime
polynomials; basic hypergeometric series; Jackson's $_8\phi_7$
summation; Watson's $_8\phi_7$ transformation

 \vskip 0.2cm \noindent{\it AMS
Subject Classifications:} 33D15; 11A07; 11B65

\section{Introduction}
For any nonnegative integer $n$ and complex number $x$, define the
shifted-factorial to be
\[(x)_{n}=\Gamma(x+n)/\Gamma(x),\]
where $\Gamma(x)$ is the famous Gamma function. Let $p$ be an odd
prime throughout the paper and $\mathbb{Z}_p$ denote the ring of all
$p$-adic integers. Define Morita's $p$-adic Gamma function (cf.
\cite[Chapter 7]{Robert}) by
 \[\Gamma_{p}(0)=1\quad \text{and}\quad \Gamma_{p}(n)
=(-1)^n\prod_{\substack{1\leqslant k< n\\
p\nmid k}}k,\quad \text{when}\quad n\in\mathbb{Z}^{+}.\] Noting that
$\mathbb{N}$ is a dense subset of $\mathbb{Z}_p$ associated with the
$p$-adic norm $|\cdot|_p$, for each $x\in\mathbb{Z}_p$, the
definition of $p$-adic Gamma function can be extended as
 \[\Gamma_{p}(x)
=\lim_{\substack{n\in\mathbb{N}\\
|x-n|_p\to0}}\Gamma_{p}(n).\]

 In 1997, Van Hamme
\cite[(D. 2)]{Hamme} conjectured the following supercongruence: for
$p\equiv 1\pmod 6$,
\begin{equation}\label{vanhamme}
\sum_{k=0}^{(p-1)/3}(6k+1)\frac{(1/3)_k^6}{k!^6}\equiv
 -p\Gamma_p(1/3)^9 \pmod{p^4}.
\end{equation}
 In
2016, Long and Ramakrishna \cite[Theorem 2]{LR} obtained the
generalization of \eqref{vanhamme}:
\begin{equation}\label{eq:long}
\sum_{k=0}^{p-1}(6k+1)\frac{(1/3)_k^6}{k!^6}\equiv
\begin{cases}\displaystyle -p\Gamma_p(1/3)^9  \pmod{p^6}, &\text{if $p\equiv
1\pmod 6$,}
\\[7pt]
\displaystyle -\frac{10}{27}p^4\Gamma_p(1/3)^9\pmod{p^6}, &\text{if
$p\equiv 5\pmod 6$.}
\end{cases}
\end{equation}
 Some results
and conjectures related to \eqref{eq:long} can be seen in Guo, Liu,
and Schlosser \cite{GLS}.

 For any complex numbers $x$ and $q$, define the $q$-shifted factorial
 to be
 \begin{equation*}
(x;q)_{0}=1\quad\text{and}\quad
(x;q)_n=(1-x)(1-xq)\cdots(1-xq^{n-1})\quad \text{when}\quad
n\in\mathbb{Z}^{+}.
 \end{equation*}
For simplicity, we usually adopt the compact notation
\begin{equation*}
(x_1,x_2,\dots,x_m;q)_{n}=(x_1;q)_{n}(x_2;q)_{n}\cdots(x_m;q)_{n},
 \end{equation*}
where $m\in\mathbb{Z}^{+}$ and $n\in\mathbb{Z}^{+}\cup\{0\}.$ Let
$[r]$ be the $q$-integer $(1-q^r)/(1-q)$ and $\Phi_n(q)$ stand for
the $n$-th cyclotomic polynomial in $q$:
\begin{equation*}
\Phi_n(q)=\prod_{\substack{1\leqslant k\leqslant n\\
\gcd(k,n)=1}}(q-\zeta^k),
\end{equation*}
where $\zeta$ is an $n$-th primitive root of unity. Recently, Guo
and Schlosser \cite[Theorem 2.3]{GS} discovered a partial
$q$-analogue of \eqref{eq:long}: for any positive integer $n$,
\begin{equation}\label{Guo}
\sum_{k=0}^{n-1}[6k+1]\frac{(q;q^3)_k^6}{(q^3;q^3)_k^6}q^{3k}\equiv
\begin{cases} \displaystyle 0  \pmod{[n]}, &\text{if $n\equiv 1\pmod 3$,}\\[10pt]
\displaystyle  0 \pmod{[n]\Phi_n(q)}, &\text{if $n\equiv 2\pmod 3$.}
\end{cases}
\end{equation}
They also proposed the following two conjectures
(cf. \cite[Conjectures 4.2 and 5.11]{GS}):
\begin{align*}
&\sum_{k=0}^{n-1}[2dk+1]\frac{(aq,q/a,bq,q/b;q^d)_k(q;q^d)_k^2}{(aq^d,q^d/a,bq^d,q^d/b;q^d)_k(q^d;q^d)_k^2}q^{(2d-3)k}
 \\[5pt]
&\quad\equiv
\begin{cases} \displaystyle 0  \pmod{[n]\Phi_n(q)}, &\text{if $n\equiv -1\pmod d$,}\\[10pt]
\displaystyle 0 \pmod{[n]}, &\text{otherwise,}
\end{cases}
\end{align*}
where $n>0$, $d\geq3$ are integers with $\gcd(n,d)=1$, and
\begin{align*}
&\sum_{k=0}^{n-1}[2dk-1]\frac{(aq^{-1},q^{-1}/a,bq^{-1},q^{-1}/b;q^d)_k(q^{-1};q^d)_k^2}{(aq^d,q^d/a,bq^d,q^d/b;q^d)_k(q^d;q^d)_k^2}q^{(2d+3)k}
 \\[5pt]
&\quad\equiv
\begin{cases} \displaystyle 0  \pmod{[n]\Phi_n(q)}, &\text{if $n\equiv 1\pmod d$,}\\[10pt]
\displaystyle 0 \pmod{[n]}, &\text{otherwise,}
\end{cases}
\end{align*}
where $n>1$, $d\geq3$ are integers satisfying $\gcd(n,d)=1$. It
should be pointed out that the two conjectures have been proved by
Ni and Wang \cite{NW}. There are more $q$-analogues of
supercongruences in the literature, we refer the reader to
\cite{Guo-R,GS20c,GuoZu-b,LW,LP,Tauraso,WY-a,Wei,Zu19}.

In this paper, we shall establish the following two theorems, which
extend \eqref{Guo} and can be regarded as the further $q$-analogue
of \eqref{eq:long}.

\begin{thm}\label{thm-b}
Let $n$ be a positive integer such that $n\equiv 1\pmod 3$. Then,
modulo $[n]\Phi_n(q)^4$,
\begin{align*}
\sum_{k=0}^{M}[6k+1]\frac{(q;q^3)_k^6}{(q^3;q^3)_k^6}q^{3k}
&\equiv[n]
\frac{(q^2;q^3)_{(n-1)/3}^3}{(q^3;q^3)_{(n-1)/3}^3}\\[2mm]
&\quad\times\:
\bigg\{1+[n]^2(2-q^{n})\sum_{j=1}^{(n-1)/3}\bigg(\frac{q^{3j-1}}{[3j-1]^2}-\frac{q^{3j}}{[3j]^2}\bigg)\bigg\},
\end{align*}
where $M=(n-1)/3$ or $n-1$.
\end{thm}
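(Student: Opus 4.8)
The plan is to prove Theorem~\ref{thm-b} by the creative microscoping method of Guo and Zudilin, together with the Chinese remainder theorem for coprime polynomials; the new feature, compared with the known $[n]\Phi_n(q)^3$ companion, is that $[n]\Phi_n(q)^4$ secretly carries a \emph{fifth} power of $\Phi_n(q)$. Since $[n]\Phi_n(q)^4=\Phi_n(q)^5\cdot\big([n]/\Phi_n(q)\big)$ with the two factors coprime, it suffices to establish the congruence modulo $\Phi_n(q)^5$ and modulo $[n]/\Phi_n(q)$ separately. The latter is immediate: the right-hand side is a multiple of $[n]$, hence vanishes modulo $[n]/\Phi_n(q)$, while the left-hand side with $M=n-1$ vanishes modulo $[n]$ by \eqref{Guo}; the passage to $M=(n-1)/3$ costs nothing, by a routine valuation count, since each term with $(n-1)/3<k\le n-1$ is divisible by $[n]$ (the numerator $(q;q^3)_k^6$ supplies every $\Phi_d(q)$ with $d\mid n$, $d>1$, to high enough order, using that $3\nmid d$ because $n\equiv1\pmod3$).

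For the part modulo $\Phi_n(q)^5$ I would introduce the two-parameter refinement
\[
F_n(a,b)=\sum_{k=0}^{M}[6k+1]\,\frac{(q;q^3)_k^2\,(aq,q/a,bq,q/b;q^3)_k}{(q^3;q^3)_k^2\,(aq^3,q^3/a,bq^3,q^3/b;q^3)_k}\,q^{3k},
\]
which recovers the left-hand side at $a=b=1$, is invariant under $a\mapsto1/a$ and $b\mapsto1/b$, and is — in base $q^3$ — a very-well-poised ${}_8\phi_7$ whose very-well-poised parameter equals $q$, so that $[6k+1]$ is exactly the very-well-poised factor. When $a=q^{-n}$ (and, by the symmetries, $a=q^{n}$, $b=q^{\pm n}$), the factor $(q^{1-n};q^3)_k$ vanishes for $k>(n-1)/3$ because $n\equiv1\pmod3$, so the series terminates and, as one checks, the balancing condition for Jackson's ${}_8\phi_7$ summation holds; Jackson's formula then evaluates each of these four specializations as an explicit product. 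If $G_n(a,b)$ denotes the rational function interpolating these four values, then $F_n(a,b)-G_n(a,b)$ vanishes at $a=q^{\pm n}$ and $b=q^{\pm n}$, so the Chinese remainder theorem for polynomials yields
\[
F_n(a,b)\equiv G_n(a,b)\pmod{(1-aq^{n})(a-q^{n})(1-bq^{n})(b-q^{n})}.
\]
Setting $a=b=1$, the four linear factors become $(1-q^n)^4$, which contributes $\Phi_n(q)^4$, and one gets $F_n(1,1)\equiv G_n(1,1)\pmod{\Phi_n(q)^4}$.

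To reach the fifth power and to produce the explicit factor $1+[n]^2(2-q^n)\sum_j\big(q^{3j-1}/[3j-1]^2-q^{3j}/[3j]^2\big)$, I would exploit that the \emph{whole} parametric sum is divisible by $[n]$ — this is the $d=3$, non-exceptional case of the Guo--Schlosser conjecture settled by Ni and Wang. Writing $F_n(a,b)-G_n(a,b)=(1-aq^n)(a-q^n)(1-bq^n)(b-q^n)R_n(a,b)$ with $R_n$ a Laurent polynomial and reducing modulo $\Phi_n(q)$, where $q^n\equiv1$ so that the prefactor becomes $\pm(1-a)^2(1-b)^2$, forces $G_n(a,b)$ to be, modulo $\Phi_n(q)$, a multiple of $(1-a)^2(1-b)^2$ whose cofactor is $\mp R_n(a,b)$; hence $R_n(1,1)$ is pinned down modulo $\Phi_n(q)$ by the second-order Taylor coefficient of the explicit product $G_n$ at $a=b=1$. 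Consequently $F_n(1,1)\equiv G_n(1,1)+(1-q^n)^4R_n(1,1)$ modulo $\Phi_n(q)^5$, and both summands are now computable by expanding the $q$-shifted factorials in $G_n$ to second order about $a=b=1$: the linear terms in the two microscoping parameters each contribute a factor $(1-q^n)/(1-q)=[n]$ — whence the $[n]^2$ — and their coefficients telescope into the $q$-harmonic sums $\sum_j\big(q^{3j-1}/[3j-1]^2-q^{3j}/[3j]^2\big)$, the asymmetry of the two contributions producing the weight $2-q^n$; collecting terms returns exactly the stated right-hand side modulo $\Phi_n(q)^5$. Combining with the congruence modulo $[n]/\Phi_n(q)$ through the Chinese remainder theorem completes the proof, uniformly in $M=(n-1)/3$ and $M=n-1$.

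I expect the genuine difficulty to lie in this last step — passing from the ``free'' $\Phi_n(q)^4$ to $\Phi_n(q)^5$ and extracting the precise $q$-harmonic correction — rather than in the parametric set-up, which is by now routine. In particular one must organize the basic-hypergeometric input so that Jackson's ${}_8\phi_7$ summation (or, after applying Watson's ${}_8\phi_7$ transformation, a summable ${}_4\phi_3$, with a $q$-Whipple-type ${}_3\phi_2$ evaluation for the degenerate factors) fires at exactly the required special values, and so that the second-order expansion of the resulting products returns the right-hand side without extra terms.
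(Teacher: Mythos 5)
Your strategy is, in substance, the paper's own: the two\-/parameter sum with $(aq,q/a,bq,q/b;q^3)_k$, Jackson's $_8\phi_7$ summation at $a,b=q^{\pm n}$ (your check of the balancing condition is correct), the Chinese remainder theorem for the coprime factors $(1-aq^n)(a-q^n)$ and $(1-bq^n)(b-q^n)$, the Ni--Wang divisibility of the parametric sum by $[n]$, and a second-order expansion at $a=b=1$ producing the $q$-harmonic correction with weight $2-q^n$. The paper merely organizes the bookkeeping differently: it keeps the modulus in the form $[n](1-aq^{n})(a-q^{n})(1-bq^{n})(b-q^{n})$ (Theorem \ref{thm-aa}), lets $b\to1$ to obtain a congruence modulo $[n]\Phi_n(q)^2(1-aq^{n})(a-q^{n})$, and then applies L'H\^opital's rule as $a\to1$, rather than splitting $[n]\Phi_n(q)^4=\Phi_n(q)^5\cdot\big([n]/\Phi_n(q)\big)$ and pinning down the remainder $R_n(1,1)$ modulo $\Phi_n(q)$. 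The two schemes are equivalent, and yours buys nothing extra here, but it is not wrong.

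There is one concretely false step: the claim that each individual term with $(n-1)/3<k\le n-1$ is divisible by $[n]$. Take $n=4$ and $k=2$: the $\Phi_2(q)$-adic order of the numerator $(1-q)^6(1-q^4)^6$ equals $6$ (all coming from $(1-q^4)^6$), and the order of the denominator $(1-q^3)^6(1-q^6)^6$ also equals $6$ (all coming from $(1-q^6)^6$), so the term $[13](q;q^3)_2^6\,q^{6}/(q^3;q^3)_2^6$ is a unit modulo $\Phi_2(q)$ and hence not divisible by $[4]=\Phi_2(q)\Phi_4(q)$; your heuristic that the numerator ``supplies every $\Phi_d(q)$ to high enough order'' ignores the matching factors in $(q^3;q^3)_k^6$. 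What is true, and all you need, is that the \emph{sum} of the tail terms is $\equiv0\pmod{[n]}$: Ni--Wang's lemma \eqref{eq:NW-a} with $c=1$, $d=3$, $r=1$, $\mu=(n-1)/3$, together with \eqref{eq:NW-b}, gives the vanishing modulo $[n]$ of \emph{both} truncations (not only the $M=n-1$ case covered by \eqref{Guo}), so their difference also vanishes modulo $[n]$. Your term-by-term argument does work for the $\Phi_n(q)^5$ part, since $(1-q^n)\mid(q;q^3)_k$ for $k\ge(n+2)/3$ while $\Phi_n(q)\nmid(q^3;q^3)_k$ for $k\le n-1$. With that one repair the proposal goes through and matches the paper's proof.
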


\begin{thm}\label{thm-c}
Let $n$ be a positive integer such that $n\equiv 2\pmod 3$. Then,
modulo $[n]\Phi_n(q)^5$,
\begin{align*}
\sum_{k=0}^{M}[6k+1]\frac{(q;q^3)_k^6}{(q^3;q^3)_k^6}q^{3k} &\equiv
5[2n] \frac{(q^2;q^3)_{(2n-1)/3}^3}{(q^3;q^3)_{(2n-1)/3}^3},
\end{align*}
where $M=(2n-1)/3$ or $n-1$.
\end{thm}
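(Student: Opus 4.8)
The plan is to prove Theorem~\ref{thm-c} by the creative microscoping method of Guo and Zudilin together with the Chinese remainder theorem for coprime polynomials. Since $n\equiv2\pmod3$, the number $(2n-1)/3$ is an integer, and the starting point is to introduce three deformation parameters and work with
\[
S_M(a,b,c)=\sum_{k=0}^{M}[6k+1]\frac{(aq;q^3)_k(q/a;q^3)_k(bq;q^3)_k(q/b;q^3)_k(cq;q^3)_k(q/c;q^3)_k}{(aq^3;q^3)_k(q^3/a;q^3)_k(bq^3;q^3)_k(q^3/b;q^3)_k(cq^3;q^3)_k(q^3/c;q^3)_k}\,q^{3k},
\]
which collapses to the left-hand side of Theorem~\ref{thm-c} at $a=b=c=1$. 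The key structural point is that specialising one parameter, say $c=q^{2n}$ (or $c=q^{-2n}$), makes $(q/c;q^3)_k=(q^{1-2n};q^3)_k$ vanish for $k>(2n-1)/3$, so that the series terminates exactly at $k=(2n-1)/3$ and becomes a terminating, very-well-poised basic hypergeometric series in the base $q^3$.

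The heart of the matter is to evaluate this terminating series in closed form: I would apply Watson's $_8\phi_7$ transformation to rewrite it as a terminating balanced $_4\phi_3$ and then sum the latter by the $q$-analogue of Whipple's $_3F_2$ summation, falling back on Jackson's $_8\phi_7$ summation wherever it applies directly. This produces an explicit product $C_n$ with the property that, viewing $S_M(a,b,c)$ as a rational function of $c$, one gets $S_M(a,b,c)\equiv C_n$ modulo $1-cq^{2n}$ and modulo $c-q^{2n}$ (these being coprime), and hence modulo their product, the remainder being identified via the symmetry of $S_M$ and of $C_n$ under $c\mapsto q^{2n}/c$. Running the same specialisation argument successively in the parameters $b$ and $a$ upgrades this to
\[
S_M(a,b,c)\equiv C_n \pmod{(1-aq^{2n})(a-q^{2n})(1-bq^{2n})(b-q^{2n})(1-cq^{2n})(c-q^{2n})}.
\]
Now $C_n$, evaluated at $a=b=c=1$, is precisely $5[2n]\,(q^2;q^3)_{(2n-1)/3}^3/(q^3;q^3)_{(2n-1)/3}^3$ — the numerical constant $5$, the $q$-world echo of the extra rational factor in the $p\equiv5\pmod6$ case of \eqref{eq:long}, dropping out of the evaluation — and at $a=b=c=1$ the modulus becomes $(1-q^{2n})^6$, which is divisible by $\Phi_n(q)^6$ because $n\mid2n$. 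Hence
\[
\sum_{k=0}^{(2n-1)/3}[6k+1]\frac{(q;q^3)_k^6}{(q^3;q^3)_k^6}q^{3k}\equiv 5[2n]\frac{(q^2;q^3)_{(2n-1)/3}^3}{(q^3;q^3)_{(2n-1)/3}^3}\pmod{\Phi_n(q)^6}.
\]
Because the right-hand side already has $\Phi_n(q)$-adic order $4$, the ``harmonic-type'' correction terms that are visible in Theorem~\ref{thm-b} here contribute order at least $6$ and are absorbed into the modulus — which is exactly why no such correction term appears in Theorem~\ref{thm-c}.

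Two points remain. First, to pass between $M=(2n-1)/3$ and $M=n-1$: for $(2n-1)/3<k\le n-1$ the factor $(q;q^3)_k$ already contains $1-q^{2n}=\prod_{e\mid2n}\Phi_e(q)$, which handles the $\Phi_n(q)^6$ part of the claim, and the $\Phi_d(q)$ parts follow by an argument as in the proof of \eqref{Guo}; so the two truncations are congruent modulo $[n]\Phi_n(q)^5$. Second, for the proper divisors $d\mid n$ with $d>1$, the argument behind \eqref{Guo} (equivalently, the parametric identity above with $n$ replaced by $d$, together with the vanishing of the tail of the series modulo $\Phi_d(q)$) gives $\sum_{k=0}^{n-1}[6k+1](q;q^3)_k^6/(q^3;q^3)_k^6\,q^{3k}\equiv0\pmod{\Phi_d(q)}$, and the right-hand side of Theorem~\ref{thm-c} is likewise $\equiv0\pmod{\Phi_d(q)}$ since $\Phi_d(q)\mid[2n]$. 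Writing $A_n(q)$ for the difference of the two sides of Theorem~\ref{thm-c}, the Chinese remainder theorem for coprime polynomials then combines $A_n(q)\equiv0\pmod{\Phi_n(q)^6}$ with $A_n(q)\equiv0\pmod{\Phi_d(q)}$ for all $d\mid n$, $1<d<n$, into $A_n(q)\equiv0\pmod{[n]\Phi_n(q)^5}$, which is the assertion.

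The main obstacle is the closed-form evaluation step: one must choose the chain of summation and transformation formulas so that the terminating series at $c=q^{2n}$ actually collapses to a product, and then verify that this product, at $a=b=c=1$, equals $5[2n]\,(q^2;q^3)_{(2n-1)/3}^3/(q^3;q^3)_{(2n-1)/3}^3$ on the nose — with the correct constant $5$ and the claimed $\Phi_n(q)$-adic order $4$. A secondary difficulty is arranging the order in which the three parameters are specialised so that the hypergeometric identities genuinely apply at each stage and the $c\mapsto q^{2n}/c$ type symmetries needed for the ``pair of coprime linear factors'' reduction are present throughout.
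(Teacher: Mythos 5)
Your proposal has two genuine gaps, and they sit exactly at the two places you yourself flag as ``the main obstacle.'' First, the closed-form evaluation you hope for does not exist in the form you need. With three deformation parameters $a,b,c$, the specialisation $c=q^{\pm 2n}$ turns $S_M$ into a terminating very-well-poised series with the six upper parameters $aq,q/a,bq,q/b,q^{1+2n},q^{1-2n}$ over the base $q^3$, i.e.\ a ${}_9\phi_8$, which is summed by neither Jackson's ${}_8\phi_7$ formula nor Watson's transformation; and even where Watson does apply, the resulting balanced ${}_4\phi_3$ is not Whipple-summable in general (this is precisely why Theorems \ref{thm-d} and \ref{thm-e} of the paper end with a double sum rather than a product). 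The paper avoids this by using only \emph{two} parameters $a,b$ and keeping $(q;q^3)_k^2$ undeformed, so that $a=q^{\pm 2n}$ yields a genuine terminating ${}_8\phi_7$ that Jackson sums to $[2n](bq^2,q^2/b,q^2;q^3)_{(2n-1)/3}/(q^3/b,bq^3,q^3;q^3)_{(2n-1)/3}$ --- a product that still depends on $b$. Consequently your claim that a single parameter-free product $C_n$ represents $S_M$ modulo all six linear factors simultaneously is structurally false: the Chinese remainder step necessarily produces a combination $\Theta(a,b,n)X(b)+\Theta(b,a,n)X(a)$ whose coefficients have denominators $(a-b)(1-ab)$.

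Second, and more seriously, the constant $5$ does not ``drop out of the evaluation,'' and the congruence cannot be obtained by naively setting $a=b=c=1$ in the modulus to get $(1-q^{2n})^6$: the CRT coefficients blow up as the parameters coalesce at $1$, so one must pass to the limit $b\to1$, then $a\to1$, by L'H\^opital. That limit produces the harmonic-type factor
\[
1+[2n]^2(2-q^{2n})\sum_{j=1}^{(2n-1)/3}\Bigl(\frac{q^{3j-1}}{[3j-1]^2}-\frac{q^{3j}}{[3j]^2}\Bigr),
\]
and the whole point of the theorem is that the single term $j=(n+1)/3$ (where $3j-1=n$) has denominator $[n]^2$, so that multiplying by $[2n]^2=[n]^2(1+q^n)^2$ leaves $(1+q^n)^2q^n(2-q^{2n})\equiv 4\pmod{\Phi_n(q)}$, whence the factor is $\equiv 5\pmod{\Phi_n(q)^2}$; combined with the $\Phi_n(q)$-adic order $\geq 4$ of the prefactor $[2n](q^2;q^3)_{(2n-1)/3}^3/(q^3;q^3)_{(2n-1)/3}^3$ this gives the modulus $[n]\Phi_n(q)^5$ (not $\Phi_n(q)^6$). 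Your proposal contains no mechanism that produces this $5$, and without the L'H\^opital analysis of the degenerate limit the argument cannot be completed.
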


Fixing $n=p^s$ and taking $q\to1$ in Theorem \ref{thm-b}, we arrive
at the conclusion.

\begin{cor}\label{cor-b}
Let $p$ be an odd prime and $s$ a positive integer such that
$p^s\equiv 1\pmod{3}$. Then, modulo $p^{s+4}$,
\begin{align*}
\sum_{k=0}^{m}(6k+1)\frac{(1/3)_k^6}{k!^6} \equiv
\frac{(2/3)_{(p^s-1)/3}^3}{(1)_{(p^s-1)/3}^3}\bigg\{p^s+p^{3s}\sum_{j=1}^{(p^s-1)/3}\bigg(\frac{1}{(3j-1)^2}-\frac{1}{(3j)^2}\bigg)\bigg\},
\end{align*}
where $m=(p^s-1)/3$ or $p^s-1$.
\end{cor}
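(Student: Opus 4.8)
The plan is to deduce Corollary~\ref{cor-b} from Theorem~\ref{thm-b} by the standard specialization $n=p^s$ followed by the limit $q\to1$. Since $[p^s]=\prod_{i=1}^{s}\Phi_{p^i}(q)$ and $\Phi_{p^i}(1)=p$ for every $i$, we have $\lim_{q\to1}[p^s]=p^s$ and $\lim_{q\to1}\Phi_{p^s}(q)=p$, so the modulus degenerates as $[n]\Phi_n(q)^4\to p^s\cdot p^4=p^{s+4}$. More precisely, writing $[n]\Phi_n(q)^4=\Phi_{p^s}(q)^5\prod_{i=1}^{s-1}\Phi_{p^i}(q)$, the difference of the two sides of the $q$-congruence, cleared of denominators, is a polynomial divisible by this product, and evaluation at $q=1$ yields the factor $p^{5}\cdot p^{s-1}=p^{s+4}$ times a $p$-adic integer. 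Here one uses that $p\equiv1\pmod3$ forces $p\ne3$, hence $\gcd(p^s,3)=1$, so $\Phi_{p^s}(q)$ divides none of the factors $1-q^{3j}$ or $1-q^{3j-1}$ occurring in the denominators of either side over the relevant ranges $1\le j\le M$ and $1\le j\le(n-1)/3$.

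Next I would carry out the term-by-term limits via $\lim_{q\to1}(1-q^m)/(1-q)=m$, equivalently $\lim_{q\to1}(q^a;q^b)_k/(1-q)^k=\prod_{j=0}^{k-1}(a+jb)$. On the left-hand side this gives, for each $k$,
\[
[6k+1]\frac{(q;q^3)_k^6}{(q^3;q^3)_k^6}q^{3k}\ \longrightarrow\ (6k+1)\frac{\bigl(\prod_{j=0}^{k-1}(3j+1)\bigr)^6}{\bigl(\prod_{j=1}^{k}3j\bigr)^6}=(6k+1)\frac{(1/3)_k^6}{k!^6},
\]
the powers of $3$ cancelling, while $M=(n-1)/3$ or $n-1$ becomes $m=(p^s-1)/3$ or $p^s-1$. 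On the right-hand side, $[n]\to p^s$, $[n]^2\to p^{2s}$, $2-q^{n}\to1$, and
\[
\frac{(q^2;q^3)_{(n-1)/3}^3}{(q^3;q^3)_{(n-1)/3}^3}\ \longrightarrow\ \frac{(2/3)_{(p^s-1)/3}^3}{(1)_{(p^s-1)/3}^3},\qquad \frac{q^{3j-1}}{[3j-1]^2}-\frac{q^{3j}}{[3j]^2}\ \longrightarrow\ \frac{1}{(3j-1)^2}-\frac{1}{(3j)^2},
\]
so the bracketed factor $[n]\{1+[n]^2(2-q^n)\sum_j(\cdots)\}$ tends to $p^s+p^{3s}\sum_{j=1}^{(p^s-1)/3}\bigl(\tfrac{1}{(3j-1)^2}-\tfrac{1}{(3j)^2}\bigr)$. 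Assembling these limits reproduces exactly the right-hand side of Corollary~\ref{cor-b}.

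The one point needing genuine care, and which I expect to be the only slightly delicate step, is the legitimacy of the limit transition: one must check that the error term $\bigl(\mathrm{LHS}(q)-\mathrm{RHS}(q)\bigr)/\bigl([n]\Phi_n(q)^4\bigr)$ is regular at $q=1$ with value in $\mathbb{Z}_{(p)}$, so that the resulting congruence is honestly modulo $p^{s+4}$ rather than merely an identity of leading terms. This follows once one observes that both sides are $p$-adically integral at $q=1$: the Pochhammer quotients there are, up to sign, the integer-valued binomial coefficients $(1/3)_k/k!=(-1)^k\binom{-1/3}{k}$ and $(2/3)_N/N!=(-1)^N\binom{-2/3}{N}$, which lie in $\mathbb{Z}_p$ because $p\ne3$; and in the displayed sum the denominators $(3j-1)^2$, $(3j)^2$ (with $3j-1,\,3j<p^s$) contribute $p$-adic valuation at most $2(s-1)$, which is absorbed by the prefactor $p^{3s}$ since $3s-2(s-1)=s+2>0$. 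Combined with the polynomial divisibility and the identities $\Phi_{p^i}(1)=p$ recorded above, this delivers the congruence modulo $p^{s+4}$; the whole passage is by now routine within the creative-microscoping framework of Guo and Zudilin.
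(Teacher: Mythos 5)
Your proposal is correct and takes exactly the route the paper itself uses: the paper derives Corollary~\ref{cor-b} by the one-line remark ``Fixing $n=p^s$ and taking $q\to1$ in Theorem~\ref{thm-b}, we arrive at the conclusion,'' and your computation of the degenerating modulus $[n]\Phi_n(q)^4\to p^{s+4}$ and of the termwise limits matches this. The only difference is that you spell out the $p$-adic integrality and regularity checks that the paper leaves implicit, which is a welcome (and accurate) elaboration rather than a departure.
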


Setting $n=p^s$ and taking $q\to 1$ in Theorem \ref{thm-c}, we are
led to the formula.
\begin{cor}\label{cor-c}
Let $p$ be an odd prime and $s$ a positive integer such that
$p^s\equiv 2\pmod{3}$. Then, modulo $p^{s+5}$,
\begin{align*}
\sum_{k=0}^{m}(6k+1)\frac{(1/3)_k^6}{k!^6} \equiv
10p^s\frac{(2/3)_{(2p^s-1)/3}^3}{(1)_{(2p^s-1)/3}^3},
\end{align*}
where $m=(2p^s-1)/3$ or $p^s-1$.
\end{cor}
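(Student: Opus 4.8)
The statement is a direct specialization of Theorem~\ref{thm-c}, which I am free to assume, so the plan is to put $n=p^s$ there and let $q\to1$, checking that the hypotheses pass through and that every $q$-ingredient converges to its classical counterpart. First, the condition $p^s\equiv2\pmod3$ is exactly what Theorem~\ref{thm-c} requires for $n=p^s$; it also forces $p\neq3$, hence $\gcd(p,3)=1$, and since then $2p^s\equiv1\pmod3$ the integer $(2p^s-1)/3$ is well defined, so the right-hand side of the corollary makes sense. The summation range $M=(2n-1)/3$ or $n-1$ in Theorem~\ref{thm-c} becomes precisely $m=(2p^s-1)/3$ or $p^s-1$, with nothing further to check.

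Next I would pass to the limit $q\to1$ on both sides, noting that the left-hand side is a finite sum of rational functions each continuous at $q=1$, so the limit may be taken termwise. To sidestep the apparent $0/0$ in ratios of the form $(q^a;q^3)_k/(q^3;q^3)_k$, rewrite them through $q$-integers: $\frac{(q;q^3)_k}{(q^3;q^3)_k}=\prod_{j=0}^{k-1}\frac{[3j+1]}{[3j+3]}$, which tends to $\prod_{j=0}^{k-1}\frac{3j+1}{3j+3}=\frac{(1/3)_k}{(1)_k}$, and similarly $\frac{(q^2;q^3)_{(2n-1)/3}}{(q^3;q^3)_{(2n-1)/3}}\to\frac{(2/3)_{(2p^s-1)/3}}{(1)_{(2p^s-1)/3}}$. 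Together with $[6k+1]\to6k+1$, $q^{3k}\to1$, and $[2n]=[2p^s]\to2p^s$, the left-hand side of Theorem~\ref{thm-c} tends to $\sum_{k=0}^{m}(6k+1)\frac{(1/3)_k^6}{k!^6}$ and the right-hand side to $10p^s\frac{(2/3)_{(2p^s-1)/3}^3}{(1)_{(2p^s-1)/3}^3}$, which is the asserted formula.

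It remains to read off the modulus. Here one invokes the standard principle used throughout the $q$-supercongruence literature (going back to Guo and Zudilin): a congruence modulo $[n]\Phi_n(q)^5$ between rational functions of $q$ with coefficients in $\mathbb{Z}_{(p)}$ whose reduced denominators are coprime to $\Phi_n(q)$ specializes, upon taking $n=p^s$ and $q\to1$, to a congruence modulo $\Phi_{p^s}(1)^5\,[p^s](1)=p^5\cdot p^s=p^{s+5}$. The only hypothesis to verify is the coprimality of the denominators that occur to $\Phi_{p^s}(q)$: these arise from $(q^3;q^3)_k$ with $k\le M\le p^s-1$, and $\Phi_{p^s}(q)\mid1-q^{3j}$ would require $p^s\mid3j$, impossible for $0<j<p^s$ precisely because $p\neq3$. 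Thus the passage is legitimate, and this coprimality check — trivial once $p\neq3$ — is the only step that deserves attention; there is no substantive obstacle in the corollary itself, the real work having been done in Theorem~\ref{thm-c}. As a consistency check, for $s=1$ the resulting congruence modulo $p^6$ should agree with the $p\equiv5\pmod6$ case of~\eqref{eq:long} after rewriting $10p\,(2/3)_{(2p-1)/3}^3/(1)_{(2p-1)/3}^3$ in terms of Morita's $p$-adic Gamma function.
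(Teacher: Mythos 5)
Your proposal is correct and is essentially the paper's own argument: the paper likewise disposes of Corollary~\ref{cor-c} in one line by setting $n=p^s$ in Theorem~\ref{thm-c} and letting $q\to1$, so that $5[2n]\to10p^s$, the $q$-shifted-factorial ratios tend to the corresponding Pochhammer ratios, and $[n]\Phi_n(q)^5$ specializes to $p^{s+5}$. Your additional checks (well-definedness of $(2p^s-1)/3$, termwise limits via $q$-integers, and the denominator/coprimality remark) only fill in routine details the paper omits.
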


According to the $s=1$ case of Corollaries \ref{cor-b} and
\ref{cor-c} and \eqref{eq:long}, we arrive at the following two
relations.

\begin{cor}\label{coro}
Let $p$ be an odd prime. If $p\equiv 1\pmod 6$, then
\begin{align*}
\frac{(2/3)_{(p-1)/3}^3}{(1)_{(p-1)/3}^3}\bigg\{1+p^2\sum_{j=1}^{(p-1)/3}\bigg(\frac{1}{(3j-1)^2}-\frac{1}{(3j)^2}\bigg)\bigg\}
\equiv -\Gamma_p(1/3)^9\pmod{p^4}.
\end{align*}
If $p\equiv 5\pmod 6$, then
\begin{align*}
\frac{(2/3)_{(2p-1)/3}^3}{(1)_{(2p-1)/3}^3} \equiv
-\frac{p^3}{27}\Gamma_p(1/3)^9\pmod{p^5}.
\end{align*}
\end{cor}

The rest of the paper is arranged as follows. Via the creative
microscoping method from Guo and Zudilin \cite{GuoZu-a} and the
Chinese remainder theorem for coprime polynomials, we shall derive
the parametric generalizations of Theorems \ref{thm-b} and
\ref{thm-c} and then prove the two theorems in Section 2.
Furthermore, two $q$-supercongruences involving double series will
be established in Sections 3.

\section{Proof of Theorems \ref{thm-b} and \ref{thm-c}}
In order to prove Theorems \ref{thm-b} and \ref{thm-c}, we need the
following proposition.

\begin{prop}\label{prop-aa}
Let $n$ be a positive integer such that $n\equiv t\pmod 3$. Then,
modulo $(1-aq^{tn})(a-q^{tn})(1-bq^{tn})(b-q^{tn})$,
\begin{align}
&\sum_{k=0}^{T}[6k+1]\frac{(aq,q/a,bq,q/b;q^3)_k(q;q^3)_k^2}{(q^3/a,aq^3,q^{3}/b,bq^3;q^3)_k(q^3;q^3)_k^2}q^{3k}
\notag\\[5pt]
&\quad\equiv\,[tn]\frac{(1-bq^{tn})(b-q^{tn})(-1-a^2+aq^{tn})}{(a-b)(1-ab)}\frac{(bq^2,q^2/b,q^2;q^3)_{(tn-1)/3}}{(q^3/b,bq^3,q^3;q^3)_{(tn-1)/3}}
\notag\\[5pt]
&\quad\quad+\:[tn]\frac{(1-aq^{tn})(a-q^{tn})(-1-b^2+bq^{tn})}{(b-a)(1-ba)}\frac{(aq^2,q^2/a,q^2;q^3)_{(tn-1)/3}}{(q^3/a,aq^3,q^3;q^3)_{(tn-1)/3}},
\label{eq:wei-cc}
\end{align}
where $T=(tn-1)/3$ or $n-1$ and $t\in\{1,2\}$.
\end{prop}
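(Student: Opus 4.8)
The plan is to imitate the proof of Proposition~\ref{prop-c}. Since $n\equiv t\pmod 3$ with $t\in\{1,2\}$ forces $tn\equiv 1\pmod 3$, the exponent $(tn-1)/3$ is a nonnegative integer, and both values of $t$ can be handled at once. The first step is to prove the one-parameter reduction: modulo $(1-aq^{tn})(a-q^{tn})$,
\begin{align}
&\sum_{k=0}^{T}[6k+1]\frac{(aq,q/a,bq,q/b;q^3)_k(q;q^3)_k^2}{(q^3/a,aq^3,q^{3}/b,bq^3;q^3)_k(q^3;q^3)_k^2}q^{3k}
\notag\\[3pt]
&\qquad\equiv[tn]\frac{(bq^2,q^2/b,q^2;q^3)_{(tn-1)/3}}{(q^3/b,bq^3,q^3;q^3)_{(tn-1)/3}}.
\label{eq:planonepar}
\end{align}
Interchanging $a$ and $b$ then gives the companion congruence modulo $(1-bq^{tn})(b-q^{tn})$, and since these two moduli are coprime, the Chinese remainder theorem for coprime polynomials assembles them into \eqref{eq:wei-cc}.

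To prove \eqref{eq:planonepar} it is enough, by the coprimality of $1-aq^{tn}$ and $a-q^{tn}$, to check it after the two specializations $a=q^{tn}$ and $a=q^{-tn}$. For $a=q^{tn}$ the factor $(q/a;q^3)_k=(q^{1-tn};q^3)_k$ vanishes for every $k>(tn-1)/3$, and for $a=q^{-tn}$ the same happens with $(aq;q^3)_k$; hence the left-hand side truncates at $k=(tn-1)/3$, the two admissible choices $T=(tn-1)/3$ and $T=n-1$ produce the same series, and, as the summand is invariant under $a\mapsto 1/a$, both specializations yield the same value. Pulling out the very-well-poised factor
\[\frac{(q,\,q^{7/2},\,-q^{7/2};q^3)_k}{(q^3,\,q^{1/2},\,-q^{1/2};q^3)_k}=[6k+1]\,\frac{(q;q^3)_k}{(q^3;q^3)_k},\]
one sees that this terminating series is a very-well-poised ${}_8\phi_7$ in base $q^3$ with leading parameter $q$, argument $q^3$, and five remaining parameters $q^{1+tn},q^{1-tn},bq,q/b,q$; since $q^{1-tn}=(q^3)^{-(tn-1)/3}$ the series is terminating, and the other four parameters satisfy the requisite balancing condition of Jackson's ${}_8\phi_7$ summation formula (cf. \cite[Appendix (II.22)]{Gasper}). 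Evaluating by that formula and using the telescoping identity $(q^4;q^3)_{(tn-1)/3}/(q;q^3)_{(tn-1)/3}=[tn]$ produces precisely the right-hand side of \eqref{eq:planonepar}.

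Interchanging $a$ and $b$ in \eqref{eq:planonepar}, we obtain, modulo $(1-bq^{tn})(b-q^{tn})$,
\begin{align*}
\sum_{k=0}^{T}[6k+1]\frac{(aq,q/a,bq,q/b;q^3)_k(q;q^3)_k^2}{(q^3/a,aq^3,q^{3}/b,bq^3;q^3)_k(q^3;q^3)_k^2}q^{3k}
\equiv[tn]\frac{(aq^2,q^2/a,q^2;q^3)_{(tn-1)/3}}{(q^3/a,aq^3,q^3;q^3)_{(tn-1)/3}}.
\end{align*}
The polynomials $(1-aq^{tn})(a-q^{tn})$ and $(1-bq^{tn})(b-q^{tn})$ are relatively prime, and the substitutions $q^{tn}\mapsto 1/a$ and $q^{tn}\mapsto a$ (resp. $q^{tn}\mapsto 1/b$ and $q^{tn}\mapsto b$) verify the interpolation congruences
\begin{align*}
\frac{(1-bq^{tn})(b-q^{tn})(-1-a^2+aq^{tn})}{(a-b)(1-ab)}&\equiv1\pmod{(1-aq^{tn})(a-q^{tn})},\\[3pt]
\frac{(1-aq^{tn})(a-q^{tn})(-1-b^2+bq^{tn})}{(b-a)(1-ba)}&\equiv1\pmod{(1-bq^{tn})(b-q^{tn})}.
\end{align*}
Feeding the two one-parameter congruences and these two identities into the Chinese remainder theorem for coprime polynomials yields \eqref{eq:wei-cc}. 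The one step that demands care is the parameter bookkeeping identifying the specialized sum with the canonical very-well-poised ${}_8\phi_7$ and confirming its balancing condition; checking that the tail terms with $(tn-1)/3<k\le n-1$ vanish and the remaining simplifications are routine.
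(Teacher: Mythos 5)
Your proposal is correct and follows essentially the same route as the paper: specialize $a=q^{\pm tn}$, recognize the truncated sum as a terminating very-well-poised ${}_8\phi_7$ in base $q^3$ summable by Jackson's formula, interchange $a$ and $b$, and glue the two one-parameter congruences with the Chinese remainder theorem using the same interpolation identities (the paper's \eqref{relation-aa} and \eqref{relation-bb} with $q^n\mapsto q^{tn}$). The parameter bookkeeping and the balancing condition you flag both check out, so no gap remains.
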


\begin{proof}
When $a=q^{-tn}$ or $a=q^{tn}$, the left-hand side of
\eqref{eq:wei-cc} is equal to
\begin{align}
&\sum_{k=0}^{T}[6k+1]\frac{(q^{1-tn},q^{1+tn},bq,q/b;q^3)_k(q;q^3)_k^2}{(q^{3+tn},q^{3-tn},q^{3}/b,bq^3;q^3)_k(q^3;q^3)_k^2}q^{3k}
\notag\\[5pt]
&= {_8\phi_7}\!\left[\begin{array}{cccccccc} q,& q^{\frac{7}{2}},&
-q^{\frac{7}{2}}, & q,  & bq,   & q/b, & q^{1+tn}, & q^{1-tn}
\\[5pt]
  & q^{\frac{1}{2}}, & -q^{\frac{1}{2}}, & q^{3}, & q^3/b,  & bq^3, & q^{3-tn}, & q^{3+tn}
\end{array};q^3,\, q^3
\right],
 \label{eq:wei-dd}
\end{align}
where the basic hypergeometric series (cf. \cite{Gasper}) has been
defined by
$$
_{r+1}\phi_{r}\left[\begin{array}{c}
a_0,a_1\ldots,a_{r}\\
\quad\:\: b_1,\ldots,b_{r}
\end{array};q,\, z
\right] =\sum_{k=0}^{\infty}\frac{(a_0,a_1,\ldots, a_{r};q)_k}
{(q,b_1,\ldots,b_{r};q)_k}z^k.
$$
Via Jackon's $_8\phi_7$ summation (cf. \cite[Appendix
(II.22)]{Gasper}):
\begin{align*}
& _{8}\phi_{7}\!\left[\begin{array}{cccccccc}
a,& qa^{\frac{1}{2}},& -qa^{\frac{1}{2}}, & b,    & c,    & d,    & e,    & q^{-n} \\
  & a^{\frac{1}{2}}, & -a^{\frac{1}{2}},  & aq/b, & aq/c, & aq/d, & aq/e, & aq^{n+1}
\end{array};q,q
\right] \notag\\[5pt]
&\quad =\frac{(aq,aq/bc,aq/bd,aq/cd;q)_{n}}{(aq/b,aq/c,aq/d,aq/bcd;q)_{n}},
\end{align*}
where $a^2q=bcdeq^{-n}$, the right-hand side of \eqref{eq:wei-dd}
can be stated as
\begin{align*}
[tn]\frac{(bq^2,q^2/b,q^2;q^3)_{(tn-1)/3}}{(q^3/b,bq^3,q^3;q^3)_{(tn-1)/3}}.
\end{align*}
Because $(1-aq^{tn})$ and $(a-q^{tn})$ are relatively prime
polynomials, we get the following result: modulo
$(1-aq^{tn})(a-q^{tn})$,
\begin{align}
\sum_{k=0}^{T}[6k+1]\frac{(aq,q/a,bq,q/b;q^3)_k(q;q^3)_k^2}{(q^3/a,aq^3,q^{3}/b,bq^3;q^3)_k(q^3;q^3)_k^2}q^{3k}
\equiv\,[tn]\frac{(bq^2,q^2/b,q^2;q^3)_{(tn-1)/3}}{(q^3/b,bq^3,q^3;q^3)_{(tn-1)/3}}.
\label{eq:wei-ee}
\end{align}
Interchanging the parameters $a$ and $b$ in \eqref{eq:wei-ee}, we
have the $q$-supercongruence:  modulo  $(1-bq^{tn})(b-q^{tn})$,
\begin{align}
\sum_{k=0}^{T}[6k+1]\frac{(aq,q/a,bq,q/b;q^3)_k(q;q^3)_k^2}{(q^3/a,aq^3,q^{3}/b,bq^3;q^3)_k(q^3;q^3)_k^2}q^{3k}
\equiv\,[tn]\frac{(aq^2,q^2/a,q^2;q^3)_{(tn-1)/3}}{(q^3/a,aq^3,q^3;q^3)_{(tn-1)/3}}.
\label{eq:wei-ff}
\end{align}

It is clear that the polynomials $(1-aq^{tn})(a-q^{tn})$ and
$(1-bq^{tn})(b-q^{tn})$ are relatively prime.
 Noting the relations:
\begin{align}
&\frac{(1-bq^{tn})(b-q^{tn})(-1-a^2+aq^{tn})}{(a-b)(1-ab)}\equiv1\pmod{(1-aq^{tn})(a-q^{tn})},
\label{relation-aa}
\\[5pt]
&\frac{(1-aq^{tn})(a-q^{tn})(-1-b^2+bq^{tn})}{(b-a)(1-ba)}\equiv1\pmod{(1-bq^{tn})(b-q^{tn})}
\label{relation-bb}
\end{align}
and employing the Chinese remainder theorem for coprime polynomials,
from \eqref{eq:wei-ee} and \eqref{eq:wei-ff} we can deduce the
 $q$-supercongruence \eqref{eq:wei-cc}.
\end{proof}

 Now we are ready to give the parametric generalizations of Theorems
\ref{thm-b} and \ref{thm-c}.

\begin{thm}\label{thm-aa}
Let $n$ be a positive integer such that $n\equiv 1\pmod 3$. Then,
modulo $[n](1-aq^{n})(a-q^{n})(1-bq^{n})(b-q^{n})$,
\begin{align*}
&\sum_{k=0}^{M}[6k+1]\frac{(aq,q/a,bq,q/b;q^3)_k(q;q^3)_k^2}{(q^3/a,aq^3,q^{3}/b,bq^3;q^3)_k(q^3;q^3)_k^2}q^{3k}
\notag\\[5pt]
&\quad\equiv\,[n]\frac{(1-bq^{n})(b-q^{n})(-1-a^2+aq^{n})}{(a-b)(1-ab)}\frac{(bq^2,q^2/b,q^2;q^3)_{(n-1)/3}}{(q^3/b,bq^3,q^3;q^3)_{(n-1)/3}}
\notag\\[5pt]
&\quad\quad+\:[n]\frac{(1-aq^{n})(a-q^{n})(-1-b^2+bq^{n})}{(b-a)(1-ba)}\frac{(aq^2,q^2/a,q^2;q^3)_{(n-1)/3}}{(q^3/a,aq^3,q^3;q^3)_{(n-1)/3}},
\end{align*}
where $M=(n-1)/3$ or $n-1$.
\end{thm}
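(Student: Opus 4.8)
The plan is to imitate the proof of Theorem \ref{thm-f}: combine the evaluation modulo $(1-aq^{n})(a-q^{n})(1-bq^{n})(b-q^{n})$ already supplied by Proposition \ref{prop-aa} with an independent divisibility by $[n]$, and then glue the two pieces together by the Chinese remainder theorem for coprime polynomials.

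First I would record that, for $M=(n-1)/3$ or $n-1$,
\[
\sum_{k=0}^{M}[6k+1]\frac{(aq,q/a,bq,q/b;q^3)_k(q;q^3)_k^2}{(q^3/a,aq^3,q^{3}/b,bq^3;q^3)_k(q^3;q^3)_k^2}q^{3k}\equiv 0\pmod{[n]}.
\]
This is the specialization $c=1$, $d=3$, $r=1$ of the Ni--Wang congruences \eqref{eq:NW-a} and \eqref{eq:NW-b}: with these choices $q^r/c=q$, $cq^d=q^3$ and $cq^{2d-3r}=q^3$, so the Ni--Wang summand collapses to precisely the summand above; the hypothesis $\gcd(n,d)=1$ holds because $n\equiv 1\pmod 3$; and taking $\mu=(n-1)/3$ one has $0\le\mu\le n-1$ and $d\mu=n-1\equiv -1=-r\pmod n$, so \eqref{eq:NW-a} handles $M=(n-1)/3$ while \eqref{eq:NW-b} handles $M=n-1$.

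Next I would note that the right-hand side of Theorem \ref{thm-aa} is literally the $t=1$ instance of the right-hand side of Proposition \ref{prop-aa} (and $tn=n$ in that case), so Proposition \ref{prop-aa} already gives that the sum is congruent to this right-hand side modulo $(1-aq^{n})(a-q^{n})(1-bq^{n})(b-q^{n})$. Moreover both terms of that right-hand side carry an explicit factor $[n]$, while the remaining rational factors have denominators $(a-b)(1-ab)$ and $(q^3/b,bq^3,q^3;q^3)_{(n-1)/3}$ (respectively their $a$-analogues), none of which shares a factor with $[n]$: indeed $[n]$ is a polynomial in $q$ whose roots are roots of unity of order $>1$ dividing $n$, whereas $(q^3;q^3)_{(n-1)/3}$ involves only $q^3,q^6,\dots,q^{n-1}$ and $\gcd(3,n)=1$. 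Hence the right-hand side of Theorem \ref{thm-aa} is itself $\equiv 0\pmod{[n]}$, and therefore the difference of the two sides of Theorem \ref{thm-aa} is $\equiv 0\pmod{[n]}$.

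Finally, since $[n]$ and $(1-aq^{n})(a-q^{n})(1-bq^{n})(b-q^{n})$ are relatively prime polynomials, the Chinese remainder theorem for coprime polynomials upgrades the two congruences just obtained to the assertion that the left-hand side is congruent to the right-hand side modulo the product $[n](1-aq^{n})(a-q^{n})(1-bq^{n})(b-q^{n})$, which is Theorem \ref{thm-aa}. There is no genuine obstacle here beyond Proposition \ref{prop-aa} itself (whose heart is the Jackson $_8\phi_7$ summation together with the coprime-polynomial CRT); the only care needed is the bookkeeping that the Ni--Wang specialization meets all of its hypotheses simultaneously, and with $d=3$, $r=1$, $\mu=(n-1)/3$ and $n\equiv 1\pmod 3$ this is immediate.
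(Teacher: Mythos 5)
Your proof is correct and follows the paper's argument for Theorem \ref{thm-aa} exactly: the $c=1$, $d=3$, $r=1$, $\mu=(n-1)/3$ specialization of the Ni--Wang congruences \eqref{eq:NW-a} and \eqref{eq:NW-b} gives divisibility by $[n]$, the $t=1$ case of Proposition \ref{prop-aa} gives the congruence modulo $(1-aq^{n})(a-q^{n})(1-bq^{n})(b-q^{n})$, and the Chinese remainder theorem for coprime polynomials glues the two. The only inaccuracy is your claim that $(q^3;q^3)_{(n-1)/3}$ shares no factor with $[n]$ --- for $n=10$, $\Phi_2(q)$ divides both $1-q^6$ and $[10]$ --- but the conclusion that the right-hand side is $\equiv 0\pmod{[n]}$ still holds, since for each $m\mid n$ with $m>1$ the numerator $(q^2;q^3)_{(n-1)/3}$ contains $\Phi_m(q)$ to at least the multiplicity with which it occurs in $(q^3;q^3)_{(n-1)/3}$, a point the paper itself leaves entirely implicit.
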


\begin{proof}
Ni and Wang \cite[Lemma 2.2]{NW}) provides
\begin{align}
&\sum_{k=0}^{m}[2dk+r]\frac{(aq^r,q^r/a,bq^r,q^r/b,q^r/c,q^r;q^d)_k}{(q^d/a,aq^d,q^d/b,bq^d,cq^d,q^d;q^d)_k}(cq^{2d-3r})^k
\equiv 0\pmod{[n]}, \label{eq:NW-a}
\\[5pt]
&\sum_{k=0}^{n-1}[2dk+r]\frac{(aq^r,q^r/a,bq^r,q^r/b,q^r/c,q^r;q^d)_k}{(q^d/a,aq^d,q^d/b,bq^d,cq^d,q^d;q^d)_k}(cq^{2d-3r})^k
\equiv 0\pmod{[n]}, \label{eq:NW-b}
\end{align}
where $n,d$ are positive integers and $r$ is an integer such that
$0\leq m\leq n-1$, $\gcd(n,d)=1$, and $dm\equiv -r\pmod{n}$.

 Letting
$c=1$, $d=3$, $\mu=(n-1)/3$, $r=1$ in \eqref{eq:NW-a} and
\eqref{eq:NW-b}, we have
\begin{align}
\sum_{k=0}^{M}[6k+1]\frac{(aq,q/a,bq,q/b;q^3)_k(q;q^3)_k^2}{(q^3/a,aq^3,q^{3}/b,bq^3;q^d)_k(q^3;q^3)_k^2}q^{3k}
\equiv 0\pmod{[n]},\label{eq:wei-hh}
\end{align}
where $n$ is a positive integer satisfying $n\equiv1\pmod{3}$.

Since $(1-aq^{n})(a-q^{n})(1-bq^{n})(b-q^{n})$ and $[n]$ are
relatively prime polynomials, we can find Theorem \ref{thm-aa} by
the $t=1$ case of Proposition \ref{prop-aa} and \eqref{eq:wei-hh}.
\end{proof}

\begin{thm}\label{thm-bb}
Let $n$ be a positive integer such that $n\equiv 2\pmod 3$. Then,
modulo $[n]\Phi_n(q)(1-aq^{2n})(a-q^{2n})(1-bq^{2n})(b-q^{2n})$,
\begin{align}
&\sum_{k=0}^{M}[6k+1]\frac{(aq,q/a,bq,q/b;q^3)_k(q;q^3)_k^2}{(q^3/a,aq^3,q^{3}/b,bq^3;q^3)_k(q^3;q^3)_k^2}q^{3k}
\notag\\[5pt]
&\quad\equiv\,[2n]\frac{(1-bq^{2n})(b-q^{2n})(-1-a^2+aq^{2n})}{(a-b)(1-ab)}\frac{(bq^2,q^2/b,q^2;q^3)_{(2n-1)/3}}{(q^3/b,bq^3,q^3;q^3)_{(2n-1)/3}}
\notag\\[5pt]
&\quad\quad+\:[2n]\frac{(1-aq^{2n})(a-q^{2n})(-1-b^2+bq^{2n})}{(b-a)(1-ba)}\frac{(aq^2,q^2/a,q^2;q^3)_{(2n-1)/3}}{(q^3/a,aq^3,q^3;q^3)_{(2n-1)/3}},
\label{eq:wei-ii}
\end{align}
 where $M=(2n-1)/3$ or $n-1$.
\end{thm}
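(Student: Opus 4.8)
The plan is to deduce Theorem \ref{thm-bb} from the $t=2$ case of Proposition \ref{prop-aa} in exactly the way Theorem \ref{thm-aa} was deduced from the $t=1$ case. Proposition \ref{prop-aa} already gives the congruence \eqref{eq:wei-ii} modulo $(1-aq^{2n})(a-q^{2n})(1-bq^{2n})(b-q^{2n})$, and this quartic, being a nontrivial polynomial in the indeterminates $a$ and $b$, is coprime to $[n]\Phi_n(q)\in\mathbb{Q}[q]$. Hence by the Chinese remainder theorem for coprime polynomials it suffices to show that \emph{both} sides of \eqref{eq:wei-ii} are congruent to $0$ modulo $[n]\Phi_n(q)$. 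Since $n\equiv 2\pmod 3$ forces $n\ge 2$, we have $\Phi_n(q)\mid[n]$ and $[n]\Phi_n(q)=\operatorname{lcm}\big(\Phi_n(q)^2,[n]\big)$, so each of these two claims can in turn be split into a divisibility by $\Phi_n(q)^2$ and a divisibility by $[n]$.

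For the left-hand side of \eqref{eq:wei-ii} with $M=n-1$, the required congruence modulo $[n]\Phi_n(q)$ is precisely the $d=3$ instance of the first Guo--Schlosser conjecture recalled in the introduction (applicable since $n\equiv-1\pmod 3$), which was proved by Ni and Wang \cite{NW}. For $M=(2n-1)/3$ I would write $\sum_{k=0}^{n-1}-\sum_{k=0}^{(2n-1)/3}$ as the tail $\sum_{k=(2n-1)/3+1}^{n-1}$ of the series. Every summand of this tail contains the factor $(q;q^3)_k^2$, and for $k>(2n-1)/3$ the product $(q;q^3)_k=\prod_{j=0}^{k-1}(1-q^{3j+1})$ includes the factor with index $j=(2n-1)/3$, namely $1-q^{2n}$, which is divisible by $\Phi_n(q)$; as for $k\le n-1$ the shifted factorials in the denominators carry no factor of $\Phi_n(q)$, the tail is divisible by $\Phi_n(q)^2$. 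The tail is also divisible by $[n]$, since both truncations $\sum_{k=0}^{n-1}$ and $\sum_{k=0}^{(2n-1)/3}$ are, by \eqref{eq:NW-a}--\eqref{eq:NW-b} with $c=1$, $d=3$, $r=1$ (taking $\mu=(2n-1)/3$, for which $3\mu\equiv-1\pmod n$). Consequently the tail, and hence $\sum_{k=0}^{(2n-1)/3}$, is divisible by $[n]\Phi_n(q)$.

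For the right-hand side of \eqref{eq:wei-ii}, each of the two terms is visibly a multiple of $[2n]$ and of $(q^2;q^3)_{(2n-1)/3}$: since $[2n]=[n](1+q^n)$ is a multiple of $[n]$, and $(q^2;q^3)_{(2n-1)/3}=\prod_{j=0}^{(2n-1)/3-1}(1-q^{3j+2})$ contains the factor $1-q^n$ (the index $j=(n-2)/3$ lying in the range), while the remaining shifted factorials in the denominators and the rational factors in $a,b$ carry no $\Phi_n(q)$, the right-hand side is $\equiv 0\pmod{[n]\Phi_n(q)}$. Combining this with the left-hand side estimate and with Proposition \ref{prop-aa}, via the Chinese remainder theorem, yields \eqref{eq:wei-ii}. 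The only delicate point is the $M=(2n-1)/3$ case of the left-hand side: Ni and Wang's theorem furnishes the extra cyclotomic factor only for the full sum $\sum_{k=0}^{n-1}$, so one must argue separately that truncating at $(2n-1)/3$ preserves it, which is where the tail/divisibility analysis above is needed; the rest of the argument — in particular the $_8\phi_7$ evaluation — is already contained in the proof of Proposition \ref{prop-aa}.
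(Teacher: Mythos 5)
Your proposal follows essentially the same route as the paper: invoke the $t=2$ case of Proposition \ref{prop-aa}, check that both sides of \eqref{eq:wei-ii} vanish modulo $[n]\Phi_n(q)$, and glue with the Chinese remainder theorem for coprime polynomials. For the left-hand side the paper handles both truncations at once by citing Ni and Wang's Theorem 2.3 (the congruence \eqref{eq:NW}), whereas you recover the truncation at $(2n-1)/3$ from the full sum by a tail argument; that argument is correct, since each tail summand contains $(1-q^{2n})^2$ in $(q;q^3)_k^2$ and the $[n]$-divisibility of both partial sums follows from \eqref{eq:NW-a}--\eqref{eq:NW-b}. The one place where your justification falls short of what is needed is the right-hand side: to get each term $\equiv 0\pmod{[n]\Phi_n(q)}$ it is not enough to observe that the term is a multiple of $[2n]$ and that the denominators carry no factor of $\Phi_n(q)$. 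Since $[n]=\prod_{1<m,\,m\mid n}\Phi_m(q)$, one must also rule out the possibility that $(q^3;q^3)_{(2n-1)/3}$ in the denominator cancels the factors $\Phi_m(q)$ coming from $[2n]$ for proper divisors $m>1$ of $n$; concretely, one needs that for every such $m$ the multiplicity of $\Phi_m(q)$ in $[2n]\,(q^2;q^3)_{(2n-1)/3}$ exceeds its multiplicity in $(q^3;q^3)_{(2n-1)/3}$ by at least one. This is exactly the cyclotomic-factor count that the paper delegates to ``the method used to prove Guo's Lemma 1'' in \cite{Guo-R}; the count does go through, but your argument as written only verifies the $\Phi_n(q)$ part and should be supplemented by this step.
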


\begin{proof}
A known result due to Ni and Wang \cite[Theorem 2.3]{NW}) is
\begin{align}
\sum_{k=0}^{\nu}[2dk+r]\frac{(aq^r,q^r/a,bq^r,q^r/b;q^d)_k(q^r;q^d)_k^2}{(q^d/a,aq^d,q^d/b,bq^d;q^d)_k(q^d;q^d)_k^2}q^{(2d-3r)k}
\equiv 0\pmod{[n]\Phi_n(q)}, \label{eq:NW}
\end{align}
where $n>1$, $d\geq3$ are integers, $r=\pm1$, and $\nu=(dn-n-r)/d$
or $n-1$ such that $n\geq d-r$, $\gcd(n,d)=1$, and
$n\equiv-r\pmod{d}$. Letting $d=3$, $r=1$ in \eqref{eq:NW}, we
arrive at
\begin{align}
\sum_{k=0}^{M}[6k+1]\frac{(aq,q/a,bq,q/b;q^3)_k(q;q^3)_k^2}{(q^3/a,aq^3,q^{3}/b,bq^3;q^d)_k(q^3;q^3)_k^2}q^{3k}
\equiv 0\pmod{[n]\Phi_n(q)},\label{eq:wei-jj}
\end{align}
where $n$ is a positive integer with $n\equiv2\pmod{3}$. According
to the method, which is used to prove Guo \cite[Lemma 1]{Guo-R}, and
noting that the factor $(1-q^n)$ appears in $(q^2;q^3)_{(2n-1)/3}$,
it is routine to see that
\begin{align*}
[2n]\frac{(q^2;q^3)_{(2n-1)/3}}{(q^3;q^3)_{(2n-1)/3}} \equiv
0\pmod{[n]\Phi_n(q)}.
\end{align*}
So we prove that \eqref{eq:wei-ii} is correct modulo $[n]\Phi_n(q)$.
Some similar discuss will be omitted elsewhere in the paper.

Because $(1-aq^{2n})(a-q^{2n})(1-bq^{2n})(b-q^{2n})$ and
$[n]\Phi_n(q)$ are relatively prime polynomials, we can establish
\eqref{eq:wei-ii} by the $t=2$ case of Proposition \ref{prop-aa} and
the upward conclusion.
\end{proof}

Subsequently, we shall display the proof of Theorems \ref{thm-b} and
\ref{thm-c}.

\begin{proof}[Proof of Theorem \ref{thm-b}]
The $b\to1$ case of Theorem \ref{thm-aa} yields the formula: modulo
$[n]\Phi_n(q)^2(1-aq^{n})(a-q^{n})$,
\begin{align}
&\sum_{k=0}^{M}[6k+1]\frac{(aq,q/a;q^3)_k(q;q^3)_k^4}{(q^3/a,aq^3;q^d)_k(q^3;q^3)_k^4}q^{3k}
\notag\\[5pt]
&\quad\equiv\,[n]\frac{(1-q^{n})^2(1+a^2-aq^{n})}{(1-a)^2}\frac{(q^2;q^3)_{(n-1)/3}^3}{(q^3;q^3)_{(n-1)/3}^3}
\notag\\[5pt]
&\quad\quad-\:[n]\frac{(1-aq^{n})(a-q^{n})(2-q^{n})}{(1-a)^2}\frac{(aq^2,q^2/a,q^2;q^3)_{(n-1)/3}}{(q^3/a,aq^3,q^3;q^3)_{(n-1)/3}}
\notag\\[5pt]
&\quad\equiv[n](1-q^{n})^2\frac{(q^2;q^3)_{(n-1)/3}^3}{(q^3;q^3)_{(n-1)/3}^3}
\notag\\[5pt]
&\quad\quad+[n]\frac{a(1-q^{n})^2(2-q^{n})}{(1-a)^2}\frac{(q^2;q^3)_{(n-1)/3}^3}{(q^3;q^3)_{(n-1)/3}^3}
\notag\\[5pt]
&\quad\quad-\:[n]\frac{(1-aq^{n})(a-q^{n})(2-q^{n})}{(1-a)^2}\frac{(aq^2,q^2/a,q^2;q^3)_{(n-1)/3}}{(q^3/a,aq^3,q^3;q^3)_{(n-1)/3}}.
\label{eq:wei-kk}
\end{align}
 By the L'H\^{o}pital rule, we have
\begin{align*}
&\lim_{a\to1}\bigg\{\frac{a(1-q^{n})^2}{(1-a)^2}\frac{(q^2;q^3)_{(n-1)/3}^2}{(q^3;q^3)_{(n-1)/3}^2}
-\frac{(1-aq^{n})(a-q^{n})}{(1-a)^2}\frac{(aq^2,q^2/a;q^3)_{(n-1)/3}}{(q^3/a,aq^3;q^3)_{(n-1)/3}}\bigg\}\\[5pt]
&\quad=\frac{(q^2;q^3)_{(n-1)/3}^2}{(q^3;q^3)_{(n-1)/3}^2}
\bigg\{q^{n}+[n]^2\sum_{j=1}^{(n-1)/3}\bigg(\frac{q^{3j-1}}{[3j-1]^2}-\frac{q^{3j}}{[3j]^2}\bigg)\bigg\}.
\end{align*}
Letting $a\to1$ in \eqref{eq:wei-kk} and employing the above limit,
we obtain Theorem \ref{thm-b}.
\end{proof}

\begin{proof}[Proof of Theorem \ref{thm-c}]
The $b\to1$ case of Theorem \ref{thm-bb} results in the conclusion:
modulo $[n]\Phi_n(q)^3(1-aq^{2n})(a-q^{2n})$,
\begin{align}
&\sum_{k=0}^{M}[6k+1]\frac{(aq,q/a;q^3)_k(q;q^3)_k^4}{(q^3/a,aq^3;q^d)_k(q^3;q^3)_k^4}q^{3k}
\notag\\[5pt]
&\quad=\,[2n](1-q^{2n})^2\frac{(q^2;q^3)_{(2n-1)/3}^3}{(q^3;q^3)_{(2n-1)/3}^3}
\notag\\[5pt]
&\quad\quad+[2n]\frac{a(1-q^{2n})^2(2-q^{2n})}{(1-a)^2}\frac{(q^2;q^3)_{(2n-1)/3}^3}{(q^3;q^3)_{(2n-1)/3}^3}
\notag\\[5pt]
&\quad\quad-\:[2n]\frac{(1-aq^{2n})(a-q^{2n})(2-q^{2n})}{(1-a)^2}\frac{(aq^2,q^2/a,q^2;q^3)_{(2n-1)/3}}{(q^3/a,aq^3,q^3;q^3)_{(2n-1)/3}}.
\label{eq:wei-ll}
\end{align}
 By the L'H\^{o}pital rule, we have
\begin{align*}
&\lim_{a\to1}\bigg\{\frac{a(1-q^{2n})^2}{(1-a)^2}\frac{(q^2;q^3)_{(2n-1)/3}^2}{(q^3;q^3)_{(2n-1)/3}^2}
-\frac{(1-aq^{2n})(a-q^{2n})}{(1-a)^2}\frac{(aq^2,q^2/a;q^3)_{(2n-1)/3}}{(q^3/a,aq^3;q^3)_{(2n-1)/3}}\bigg\}\\[5pt]
&\quad=\frac{(q^2;q^3)_{(2n-1)/3}^2}{(q^3;q^3)_{(2n-1)/3}^2}
\bigg\{q^{2n}+[2n]^2\sum_{j=1}^{(2n-1)/3}\bigg(\frac{q^{3j-1}}{[3j-1]^2}-\frac{q^{3j}}{[3j]^2}\bigg)\bigg\}.
\end{align*}
Letting $a\to1$ in \eqref{eq:wei-ll} and utilizing the upper limit,
we get the $q$-supercongruence: modulo $[n]\Phi_n(q)^5$,
\begin{align}
\sum_{k=0}^{M}[6k+1]\frac{(q;q^3)_k^6}{(q^3;q^3)_k^6}q^{3k}
&\equiv[2n] \frac{(q^2;q^3)_{(2n-1)/3}^3}{(q^3;q^3)_{(2n-1)/3}^3}
 \notag\\[5pt]
&\quad\times\:
\bigg\{1+[2n]^2(2-q^{2n})\sum_{j=1}^{(2n-1)/3}\bigg(\frac{q^{3j-1}}{[3j-1]^2}-\frac{q^{3j}}{[3j]^2}\bigg)\bigg\}.
\label{eq:wei-oo}
\end{align}
It is ordinary to certify the relation:
\begin{align}
&1+[2n]^2(2-q^{2n})\sum_{j=1}^{(2n-1)/3}\bigg(\frac{q^{3j-1}}{[3j-1]^2}-\frac{q^{3j}}{[3j]^2}\bigg)
\notag\\[4pt]
&\quad \equiv 1+[2n]^2(2-q^{2n})\frac{q^n}{[n]^2}
\notag\\[4pt]
&\quad \equiv5\pmod{\Phi_n(q)^2}. \label{eq:wei-pp}
\end{align}
Considering that the factor $(1-q^n)$ appears in
$(q^2;q^3)_{(2n-1)/3}$, the combination of \eqref{eq:wei-oo} and
\eqref{eq:wei-pp} produces Theorem \ref{thm-c}.
\end{proof}

\section{$q$-Supercongruences involving
double series.}
The main results of this section are the following two theorems.

\begin{thm}\label{thm-d}
Let $n$, $d$ be positive integers and $r$ an integer such that
$d+n-dn\leq r\leq n$, $\gcd(n,d)=1$, and $n\equiv r\pmod{d}$. Then,
modulo $[n]\Phi_n(q)^4$,
\begin{align*}
&\sum_{k=0}^{M}[2dk+r]\frac{(q^r;q^d)_k^5(cq^r;q^d)_k}{(q^d;q^d)_k^5(q^d/c;q^d)_k}\bigg(\frac{q^{2d-3r}}{c}\bigg)^k\\[2mm]
&\quad\equiv[n](cq^r)^{(r-n)/d}
\frac{(cq^{2r};q^d)_{(n-r)/d}}{(q^d/c;q^d)_{(n-r)/d}}
\\[2mm]
&\quad\quad\times\:
\sum_{k=0}^{(n-r)/d}\frac{(q^r;q^d)_k^2(q^{d-r},cq^r;q^d)_kq^{dk}}{(q^d;q^d)_k^3(cq^{2r};q^d)_k}
\\[2mm]
&\quad\quad\times\:
\bigg\{1-[n]^2(2-q^{n})\sum_{j=1}^{k}\bigg(\frac{q^{dj}}{[dj]^2}+\frac{q^{dj-d+r}}{[dj-d+r]^2}\bigg)\bigg\},
\end{align*}
where $M=(n-r)/d$ or $n-1$.
\end{thm}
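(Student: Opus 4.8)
\emph{Proof plan.} The plan is to run the creative-microscoping machinery exactly as in the proofs of Theorems~\ref{thm-b} and \ref{thm-c}, the one new feature being that the free parameter $c$ prevents closing the argument with Jackson's ${}_8\phi_7$ sum, so after Watson's transformation a terminating ${}_4\phi_3$ must be retained. First I introduce two microscoping parameters $a,b$ by splitting two of the five factors $(q^r;q^d)_k$ in the numerator (and the two matching factors $(q^d;q^d)_k$ in the denominator), forming the very-well-poised series
\begin{align*}
S_n(a,b)=\sum_{k=0}^{M}[2dk+r]\frac{(aq^r,q^r/a,bq^r,q^r/b,q^r,cq^r;q^d)_k}{(q^d/a,aq^d,q^d/b,bq^d,q^d,q^d/c;q^d)_k}\Big(\frac{q^{2d-3r}}{c}\Big)^{k},
\end{align*}
which is invariant under $a\mapsto1/a$ and under $b\mapsto1/b$ and returns the left-hand side of the theorem as $a,b\to1$. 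Writing $[2dk+r]=[r]\,(1-q^{r+2dk})/(1-q^r)$ exhibits $S_n(a,b)$ as $[r]$ times a very-well-poised ${}_8\phi_7$ of argument $q^{2d-3r}/c$; when $a=q^{n}$ or $a=q^{-n}$ the hypothesis $n\equiv r\pmod d$ turns one of $aq^r,q^r/a$ into the terminator $q^{r-n}$, so Watson's ${}_8\phi_7$ transformation formula applies and rewrites $S_n(a,b)$ as an explicit product of $q$-shifted factorials times a terminating ${}_4\phi_3$ of length $(n-r)/d$. After the elementary reversal of the single finite product $(q^{d-r-n}/c;q^d)_{(n-r)/d}$ the product collapses to $[n](cq^r)^{(r-n)/d}(cq^{2r};q^d)_{(n-r)/d}/(q^d/c;q^d)_{(n-r)/d}$, the factor $[r]$ cancelling the $1/[r]$ that appears, while the ${}_4\phi_3$ becomes the $a$-deformation $\sum_{k}(q^{d-r},cq^r,q^r/a,aq^r;q^d)_kq^{dk}/\big((q^d,q^d/b,bq^d,cq^{2r};q^d)_k\big)$, which at $a=b=1$ is precisely the inner series recorded in the theorem. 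Since $(1-aq^{n})$ and $(a-q^{n})$ are coprime and the two specialisations give the same value, this determines $S_n(a,b)$ modulo $(1-aq^{n})(a-q^{n})$.

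Interchanging $a$ and $b$ gives the companion evaluation modulo $(1-bq^{n})(b-q^{n})$, this time with the $a$-dependence sitting in the denominator factors $(q^d/a,aq^d;q^d)_k$. As the two quadratic moduli are coprime, the relations \eqref{relation-aa}--\eqref{relation-bb} and the Chinese remainder theorem for coprime polynomials combine them into a single parametric congruence modulo $(1-aq^{n})(a-q^{n})(1-bq^{n})(b-q^{n})$, whose right-hand side is the symmetric two-term expression $\Theta(a,b,n)\,(\cdots)+\Theta(b,a,n)\,(\cdots)$ with $\Theta(a,b,n)=[n](1-bq^{n})(b-q^{n})(-1-a^2+aq^{n})/\big((a-b)(1-ab)\big)$. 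To multiply the modulus by $[n]$, I invoke the Ni--Wang vanishing \eqref{eq:NW-a}--\eqref{eq:NW-b} with its parameter $c$ replaced by $1/c$ (its $a,b$ being our $a,b$), its $d,r$ as here, and $\mu=(n-r)/d$; this is admissible because $\gcd(n,d)=1$, $d\mu=n-r\equiv-r\pmod n$, and $d+n-dn\le r\le n$ forces $0\le(n-r)/d\le n-1$. It yields $S_n(a,b)\equiv0\pmod{[n]}$, and since $[n]$ is coprime to the quartic polynomial we obtain the parametric congruence modulo $[n](1-aq^{n})(a-q^{n})(1-bq^{n})(b-q^{n})$.

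It remains to pass to the limit. Putting $b=1$ is a plain substitution, and it replaces $(1-bq^{n})(b-q^{n})$ by $(1-q^{n})^2$, which carries $\Phi_n(q)^2$, so the modulus becomes $[n]\Phi_n(q)^2(1-aq^{n})(a-q^{n})$; here $\Theta(a,1,n)$ and $\Theta(1,a,n)$ turn into $[n](1-q^{n})^2(1+a^2-aq^{n})/(1-a)^2$ and $-[n](2-q^{n})(1-aq^{n})(a-q^{n})/(1-a)^2$. Finally I let $a\to1$. Both sides are regular at $a=1$, so the L'H\^{o}spital (second-order Taylor) rule applies: the cancellation $(1-q^{n})^2+q^{n}(2-q^{n})=1$ leaves the leading term $[n]$ times the prefactor times the inner series (the ``$1$'' inside the braces), while the second derivatives at $a=1$ of the $a$-dependent factors $(q^r/a;q^d)_k(aq^r;q^d)_k$ (first term) and $1/\big((q^d/a;q^d)_k(aq^d;q^d)_k\big)$ (second term) contribute respectively $-\sum_{j\le k}q^{dj-d+r}/(1-q^{dj-d+r})^2$ and $-\sum_{j\le k}q^{dj}/(1-q^{dj})^2$; multiplying by $(1-q^{n})^2(2-q^{n})$ and using $(1-q^{n})^2/(1-q^{m})^2=[n]^2/[m]^2$ turns this into exactly $-[n]^2(2-q^{n})\sum_{j=1}^{k}\big(q^{dj}/[dj]^2+q^{dj-d+r}/[dj-d+r]^2\big)$. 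Since $(1-aq^{n})(a-q^{n})\big|_{a=1}=(1-q^{n})^2$ supplies another $\Phi_n(q)^2$, the modulus has grown to $[n]\Phi_n(q)^4$, which is the claim; the case $M=n-1$ is identical, using \eqref{eq:NW-b} in place of \eqref{eq:NW-a}. The main obstacle is the bookkeeping threaded through these steps: choosing the labels in Watson's transformation so that its prefactor collapses to the stated one, and then tracking, through the iterated limit $b\to1$, $a\to1$, exactly which powers of $\Phi_n(q)$ survive, so that nothing is lost modulo $\Phi_n(q)^4$ and the inner harmonic-type sum emerges with the correct $q$-powers and the coefficient $2-q^{n}$.
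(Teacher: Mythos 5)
Your proposal reproduces the paper's own argument essentially step for step: the two-parameter microscoping series, Watson's $_8\phi_7$ transformation at $a=q^{\pm n}$ yielding the terminating $_4\phi_3$, the Chinese remainder combination via \eqref{relation-aa}--\eqref{relation-bb}, the Ni--Wang lemma (with $c\mapsto 1/c$, a substitution the paper leaves implicit) to insert the factor $[n]$, and finally the $b\to1$, $a\to1$ limits with L'H\^{o}pital producing the harmonic-type sum. The plan is correct and matches Proposition \ref{prop-aaa}, Theorem \ref{thm-aaa}, and the paper's proof of Theorem \ref{thm-d}.
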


\begin{thm}\label{thm-e}
Let $n$, $d$ be integers such that $n+r\geq d\geq 3$, $\gcd(n,d)=1$,
and $n\equiv-r\pmod{d}$. Then, modulo $[n]\Phi_n(q)^5$,
\begin{align*}
&\sum_{k=0}^{M}[2dk+r]\frac{(q^r;q^d)_k^6}{(q^d;q^d)_k^6}q^{(2d-3r)k}\\[5pt]
&\quad\equiv[dn-n]q^{r(r+n-dn)/d}
\frac{(q^{2r};q^d)_{(dn-n-r)/d}}{(q^d;q^d)_{(dn-n-r)/d}}\\[5pt]
&\qquad\times\:
\sum_{k=0}^{(dn-n-r)/d}\frac{(q^r;q^d)_k^3(q^{d-r};q^d)_kq^{dk}}{(q^d;q^d)_k^3(q^{2r};q^d)_k}
\end{align*}
\begin{align*}
&\qquad\times\:
\bigg\{1-[dn-n]^2(2-q^{dn-n})\sum_{j=1}^{k}\bigg(\frac{q^{dj}}{[dj]^2}+\frac{q^{dj-d+r}}{[dj-d+r]^2}\bigg)\bigg\},
\end{align*}
where $r=\pm1$  and $M=(dn-n-r)/d$ or $n-1$.
\end{thm}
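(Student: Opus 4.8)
The plan is to run the creative-microscoping argument with two auxiliary parameters $a,b$, exactly parallel to the route from Proposition \ref{prop-aa} and Theorem \ref{thm-bb} to Theorem \ref{thm-c}, the difference being that Watson's ${}_8\phi_7$ transformation replaces Jackson's summation and the resulting terminating ${}_4\phi_3$ is carried through to the end. Throughout put $m=(d-1)n$ and $T=(dn-n-r)/d$; since $n\equiv-r\pmod d$ one has $r+dT=m$, so $m$ plays here the role of $2n$ and $T$ the role of $(2n-1)/3$ in Theorem \ref{thm-c}. The first goal is a parametric generalisation: modulo $[n]\Phi_n(q)(1-aq^{m})(a-q^{m})(1-bq^{m})(b-q^{m})$, the sum
\begin{align*}
\sum_{k=0}^{M}[2dk+r]\frac{(aq^r,q^r/a,bq^r,q^r/b;q^d)_k(q^r;q^d)_k^2}{(q^d/a,aq^d,q^d/b,bq^d;q^d)_k(q^d;q^d)_k^2}q^{(2d-3r)k}
\end{align*}
is congruent to $\Xi(a,b)+\Xi(b,a)$, where $\Xi(a,b)=\dfrac{(1-bq^{m})(b-q^{m})(-1-a^2+aq^{m})}{(a-b)(1-ab)}\,W(b)$ and $W(b)$ is an explicit ``prefactor times terminating ${}_4\phi_3$ in $b$'' (carrying a factor $[m]$) produced below.

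To establish this I would specialise $a=q^{m}$ or $a=q^{-m}$; in either case one of $q^r/a,\ aq^r$ becomes $q^{-dT}$, so the left-hand side truncates at $k=T$ and is a terminating very-well-poised ${}_8\phi_7$ of argument $q^{2d-3r}$. When $d=3$, $r=1$ this series is balanced and Jackson's summation evaluates it directly (this is how Proposition \ref{prop-aa} is proved); for general $d$ the balancing condition of Jackson's summation fails, so I would apply Watson's ${}_8\phi_7$-to-${}_4\phi_3$ transformation formula (cf.\ \cite[Appendix~(III.18)]{Gasper}) — the argument $q^{2d-3r}$ being exactly the one for which Watson's formula applies. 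This rewrites the ${}_8\phi_7$ as a ratio of $q$-shifted factorials times a terminating ${}_4\phi_3$. Reducing the ${}_4\phi_3$ modulo $\Phi_n(q)$ (using $q^{m}\equiv1$, hence $q^{-dT}\equiv q^r$ and $q^{m+r}\equiv q^r$) and, later, setting $b=1$ turns its summand into $(q^r;q^d)_k^3(q^{d-r};q^d)_kq^{dk}\big/\big((q^d;q^d)_k^3(q^{2r};q^d)_k\big)$, and identifies the prefactor with $[m]q^{r(r+n-dn)/d}(q^{2r};q^d)_T/(q^d;q^d)_T$, so $W(1)$ is exactly the outer factor times the inner sum of the theorem. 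Because $q^r/a$ and $aq^r$ both carry the value $q^{-dT}$ at the two specialisations, this proves the left-hand side $\equiv W(a)$ modulo $(1-aq^{m})(a-q^{m})$; interchanging $a$ and $b$ gives the companion congruence modulo $(1-bq^{m})(b-q^{m})$. Since $[n]\Phi_n(q)$ is coprime to the four linear factors, combining these two with Ni and Wang's vanishing result \eqref{eq:NW} (valid for the present $d$ and $r=\pm1$) through the Chinese remainder theorem for coprime polynomials — as in the passage around \eqref{relation-aa}--\eqref{relation-bb}, with $q\mapsto q^{m}$ — yields the parametric congruence; note that $[m]$ divides $W(b)$, so $\Xi(a,b)+\Xi(b,a)$ indeed vanishes modulo $[n]\Phi_n(q)$, consistently. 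The choice $M\in\{(dn-n-r)/d,\ n-1\}$ is immaterial because, after the specialisation of $a$, the summand vanishes for $T<k\le n-1$.

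Next I would let $b\to1$ (a plain substitution: no pole arises at $b=1$ for $a\ne1$). The factor $(1-bq^{m})(b-q^{m})$ becomes $(1-q^{m})^2$, divisible by $\Phi_n(q)^2$, so the modulus improves to $[n]\Phi_n(q)^3(1-aq^{m})(a-q^{m})$; using $-1-a^2+aq^{m}=-(1-a)^2-a(2-q^{m})$ I would split off the main term $(1-q^{m})^2W(1)$ and rewrite the rest as $(2-q^{m})(1-a)^{-2}\big[a(1-q^{m})^2W(1)-(1-aq^{m})(a-q^{m})W(a)\big]$. I would then let $a\to1$: since every $a$-dependence in $W(a)$ enters through $b\leftrightarrow1/b$-symmetric parameter pairs (of the shapes $(aq^{r};q^d)_\bullet(q^{r}/a;q^d)_\bullet$ and $(q^{d}/a;q^d)_\bullet(aq^{d};q^d)_\bullet$, occurring in the prefactor and in the ${}_4\phi_3$), the bracket vanishes to second order at $a=1$, so by L'H\^{o}spital's rule the limit equals one half its second $a$-derivative. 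Differentiating prefactor and ${}_4\phi_3$ term by term, the second derivative produces the two families of harmonic terms $q^{dj}/[dj]^2$ and $q^{dj-d+r}/[dj-d+r]^2$ and, after regrouping modulo $\Phi_n(q)^5$, the bracketed factor $1-[m]^2(2-q^{m})\sum_{j=1}^{k}\big(q^{dj}/[dj]^2+q^{dj-d+r}/[dj-d+r]^2\big)$ attached to the $k$-th ${}_4\phi_3$-summand; since $(1-aq^{m})(a-q^{m})|_{a=1}=(1-q^{m})^2$ is again divisible by $\Phi_n(q)^2$, the modulus improves to $[n]\Phi_n(q)^5$, and the simplification $(1-q^{m})^2+q^{m}(2-q^{m})=1$, in the spirit of the passage after \eqref{eq:wei-kk}, assembles the asserted right-hand side.

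The main obstacle will be this last step: performing the two $a$-derivatives at $a=1$ of $W(a)$ — a product of $q$-shifted factorials times a ${}_4\phi_3$ whose summands themselves depend on $a$ — keeping careful track of all cross terms and signs, and verifying that, modulo $\Phi_n(q)^5$, everything collapses precisely to the single double sum with the exact coefficients $q^{dj}/[dj]^2$ and $q^{dj-d+r}/[dj-d+r]^2$ and no residual single-sum correction. A secondary, bookkeeping-heavy point is choosing the distribution of the Watson parameters so that the outer factor comes out packaged as $[dn-n]q^{r(r+n-dn)/d}(q^{2r};q^d)_{(dn-n-r)/d}/(q^d;q^d)_{(dn-n-r)/d}$ and the inner ${}_4\phi_3$ as stated, rather than in one of the several equivalent but differently-arranged forms.
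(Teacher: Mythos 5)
Your proposal follows essentially the same route as the paper: specialize $a=q^{\pm(d-1)n}$ and apply Watson's ${}_8\phi_7$-to-${}_4\phi_3$ transformation (the paper's Proposition 5.7 with $t=d-1$), combine the $a$- and $b$-congruences and Ni--Wang's divisibility result via the Chinese remainder theorem for coprime polynomials (the paper's Theorem 5.9), then let $b\to1$ and $a\to1$, using the split $-1-a^2+aq^{m}=-(1-a)^2-a(2-q^{m})$ and L'H\^opital's rule to produce the harmonic-type double sum and the identity $(1-q^{m})^2+q^{m}(2-q^{m})=1$ to assemble the right-hand side. The plan, including the modulus bookkeeping $\Phi_n(q)\to\Phi_n(q)^3\to\Phi_n(q)^5$, matches the paper's proof.
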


Theorems \ref{thm-d} and \ref{thm-e} are respectively the
generalizations of Theorems \ref{thm-b} and \ref{thm-c}. Some
special cases from them may be displayed as follows.

Choosing $n=p^s$ and taking $c\to1, q\to 1$ in Theorem \ref{thm-d},
we obtain the conclusion.
\begin{cor}\label{cor-e}
Let $d$, $s$ be positive integers, $p$ an odd prime,
 and $r$ an integer such that $d+p^s-dp^s\leq r\leq p^s$, $\gcd(p,d)=1$, and $p^s\equiv r\pmod{d}$. Then, modulo $p^{s+4}$,
\begin{align*}
&\sum_{k=0}^{m}(2dk+r)\frac{(r/d)_k^6}{k!^6}\equiv\frac{(2r/d)_{(p^s-r)/d}}{(1)_{(p^s-r)/d}}
\\[5pt]
&\:\:\quad\times
\sum_{k=0}^{(p^s-r)/d}\frac{(r/d)_k^3(1-r/d)_k}{k!^3(2r/d)_k}
\bigg\{p^s-p^{3s}\sum_{j=1}^{k}\bigg(\frac{1}{(dj)^2}+\frac{1}{(dj-d+r)^2}\bigg)\bigg\},
\end{align*}
where $m=(p^s-r)/d$ or $p^s-1$.
\end{cor}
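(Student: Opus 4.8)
The plan is to recover Corollary~\ref{cor-e} from Theorem~\ref{thm-d} by the specialization $n=p^{s}$, $c\to1$, $q\to1$, exactly as Corollaries~\ref{corl-a}--\ref{cor-c} were extracted from Theorems~\ref{thm-a}--\ref{thm-c}. First, with $n=p^{s}$ the hypotheses of Theorem~\ref{thm-d} are met: $d+p^{s}-dp^{s}\le r\le p^{s}$ and $p^{s}\equiv r\pmod d$ are assumed, while $\gcd(p^{s},d)=1$ follows from $\gcd(p,d)=1$; hence the theorem yields a $q$-supercongruence modulo $[p^{s}]\Phi_{p^{s}}(q)^{4}$. Next I would set $c=1$. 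Since the modulus is free of $c$ and both sides of Theorem~\ref{thm-d} are regular at $c=1$, this substitution is harmless: on the left $(cq^{r};q^{d})_{k}/(q^{d}/c;q^{d})_{k}$ collapses to $(q^{r};q^{d})_{k}/(q^{d};q^{d})_{k}$ and $(q^{2d-3r}/c)^{k}$ to $q^{(2d-3r)k}$, so the sum becomes $\sum_{k=0}^{M}[2dk+r](q^{r};q^{d})_{k}^{6}/(q^{d};q^{d})_{k}^{6}\,q^{(2d-3r)k}$, while on the right the $k$-th summand becomes $(q^{r};q^{d})_{k}^{3}(q^{d-r};q^{d})_{k}q^{dk}/\bigl((q^{d};q^{d})_{k}^{3}(q^{2r};q^{d})_{k}\bigr)$.

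Then I would let $q\to1$. The relevant facts are $\Phi_{p^{j}}(1)=p$ for every $j$, so that $[p^{s}]=\prod_{j=1}^{s}\Phi_{p^{j}}(q)$ tends to $p^{s}$ and the modulus $[p^{s}]\Phi_{p^{s}}(q)^{4}$ tends to $p^{s+4}$; the limits $\lim_{q\to1}[m]=m$ and $\lim_{q\to1}(q^{a};q^{d})_{k}/(q^{d};q^{d})_{k}=(a/d)_{k}/k!$; and $2-q^{p^{s}}\to1$. With these the left-hand side tends to $\sum_{k=0}^{m}(2dk+r)(r/d)_{k}^{6}/k!^{6}$ and the right-hand side, after absorbing the leading factor $[p^{s}]\to p^{s}$ into the brace, tends to
\[
\frac{(2r/d)_{(p^{s}-r)/d}}{(1)_{(p^{s}-r)/d}}\sum_{k=0}^{(p^{s}-r)/d}\frac{(r/d)_{k}^{3}(1-r/d)_{k}}{k!^{3}(2r/d)_{k}}\Bigl\{p^{s}-p^{3s}\sum_{j=1}^{k}\Bigl(\tfrac{1}{(dj)^{2}}+\tfrac{1}{(dj-d+r)^{2}}\Bigr)\Bigr\},
\]
which is exactly the right-hand side of Corollary~\ref{cor-e}; the interchangeability of the two admissible values of $m$ is inherited from the same property of $M$ in Theorem~\ref{thm-d}.

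The single genuine point, and the step I expect to be the main obstacle, is to justify that the $q$-congruence descends, upon substituting $q=1$, to a congruence modulo $p^{s+4}$ in $\mathbb{Z}_{p}$. For this I would write the difference of the two specialized sides as $[p^{s}]\Phi_{p^{s}}(q)^{4}\,C(q)$ and verify that, once both sides are reduced to lowest terms, every surviving denominator is coprime to each $\Phi_{p^{a}}(q)$ with $a\le s$, so that (since $\Phi_{m}(1)$ is divisible by $p$ only when $m$ is a power of $p$) all these denominators, and hence $C(q)$, are $p$-adic units at $q=1$. The coprimality is the usual bookkeeping with $\gcd(p,d)=1$: the $q$-factors $(q^{d};q^{d})_{k}$, $(q^{2r};q^{d})_{k}$, $(q^{d};q^{d})_{(p^{s}-r)/d}$, $[dj]$, $[dj-d+r]$ occurring for $k,j$ in the stated ranges are never divisible by $\Phi_{p^{s}}(q)$ (indeed $\Phi_{p^{s}}(q)\mid(q^{d};q^{d})_{k}$ would require $p^{s}\mid di$ for some $i\le k<p^{s}$, impossible), and each lower factor $\Phi_{p^{a}}(q)$, $a<s$, that they carry is matched by an equal power in the corresponding numerator, because $p^{a}\mid r+di$ has, by $\gcd(p,d)=1$, the same number of solutions across a run of indices as $p^{a}\mid di'$. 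This is entirely parallel to the reductions already performed in Sections~2 and~3, so I would either reproduce that computation or simply refer to it.
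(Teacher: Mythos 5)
Your proposal is correct and follows exactly the route the paper takes: the paper derives Corollary \ref{cor-e} simply by choosing $n=p^s$ and letting $c\to1$, $q\to1$ in Theorem \ref{thm-d}, which is precisely your specialization, and your limit computations and the justification that the $q$-congruence descends to a congruence modulo $p^{s+4}$ in $\mathbb{Z}_p$ are sound. The only difference is that you spell out the $p$-adic unit bookkeeping for the denominators, which the paper leaves implicit.
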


Fixing $n=p^s$ and taking $c\to-1, q\to 1$ in Theorem \ref{thm-d},
we get the supercongruence.

\begin{cor}\label{cor-g}
Let $d$, $s$ be positive integers, $p$ an odd prime,
 and $r$ an integer such that $d+p^s-dp^s\leq r\leq p^s$, $\gcd(p,d)=1$, and $p^s\equiv r\pmod{d}$. Then, modulo $p^{s+4}$,
\begin{align*}
&\sum_{k=0}^{m}(-1)^k(2dk+r)\frac{(r/d)_k^5}{k!^5}\equiv(-1)^{(r-p^s)/d}
\\[5pt]
&\:\:\quad\times
\sum_{k=0}^{(p^s-r)/d}\frac{(r/d)_k^2(1-r/d)_k}{k!^3}
\bigg\{p^s-p^{3s}\sum_{j=1}^{k}\bigg(\frac{1}{(dj)^2}+\frac{1}{(dj-d+r)^2}\bigg)\bigg\}.
\end{align*}
where $m=(p^s-r)/d$ or $p^s-1$.
\end{cor}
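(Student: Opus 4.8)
The plan is to obtain Corollary~\ref{cor-g} as a straightforward specialization of Theorem~\ref{thm-d}: take $n=p^s$, let $c\to-1$ (equivalently, set $c=-1$, since nothing becomes singular there), and then let $q\to1$. First I would note that for $n=p^s$ the hypotheses of Theorem~\ref{thm-d} are exactly the ones assumed here: the range $d+n-dn\le r\le n$ becomes $d+p^s-dp^s\le r\le p^s$, the condition $\gcd(p,d)=1$ forces $\gcd(p^s,d)=1$, and $p^s\equiv r\pmod d$ is given. Hence Theorem~\ref{thm-d} yields the $q$-supercongruence modulo $[p^s]\Phi_{p^s}(q)^4$.

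Next I would set $c=-1$ and verify that this substitution is harmless. The only $q$-shifted factorials sitting in denominators are $(q^d/c;q^d)_k$, $(cq^{2r};q^d)_k$ and $(q^d/c;q^d)_{(n-r)/d}$; at $c=-1$ these become products of factors $1+q^{dj}$ and $1+q^{2r+dj}$, and since $p$ is odd and $\gcd(p,d)=1$ no such factor is divisible by $\Phi_{p^s}(q)$, nor does any vanish at $q=1$. Thus the congruence persists under $c=-1$, and the quotient $R(q)$ obtained by dividing the difference of the two sides by $[p^s]\Phi_{p^s}(q)^4$ is a rational function of $q$ regular at $q=1$, which is what licenses the final passage $q\to1$.

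Finally I would let $q\to1$, using $[m]\to m$, $(q^a;q^d)_k/(1-q)^k\to d^k(a/d)_k$, $(-q^a;q^d)_k\to2^k$, and $\Phi_{p^s}(1)=p$. On the left the powers of $d$ and of $1-q$ from $(q^r;q^d)_k^5$ and $(q^d;q^d)_k^5$ cancel, $(-q^r;q^d)_k/(-q^d;q^d)_k\to1$, and $(q^{2d-3r}/c)^k\to(-1)^k$, so the left side becomes $\sum_{k=0}^m(-1)^k(2dk+r)(r/d)_k^5/k!^5$. On the right, $[n]\to p^s$; the prefactor $(cq^r)^{(r-n)/d}(cq^{2r};q^d)_{(n-r)/d}/(q^d/c;q^d)_{(n-r)/d}$ tends to $(-1)^{(r-p^s)/d}$, its two $q$-factorials both going to $2^{(p^s-r)/d}$; in the inner summand the $d$-powers and the factors $2^k$ from $(cq^r;q^d)_k$ and $(cq^{2r};q^d)_k$ cancel, leaving $(r/d)_k^2(1-r/d)_k/k!^3$; and in the bracket $[n]^2\to p^{2s}$, $2-q^n\to1$, $q^{dj}/[dj]^2\to1/(dj)^2$, $q^{dj-d+r}/[dj-d+r]^2\to1/(dj-d+r)^2$. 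Pulling the leftover $p^s$ from $[n]$ into the bracket turns $1-p^{2s}\sum_j(\cdots)$ into $p^s-p^{3s}\sum_j(\cdots)$, while $[p^s]\Phi_{p^s}(q)^4$ becomes $p^s\cdot p^4=p^{s+4}$; this is precisely the claimed supercongruence. Since Theorem~\ref{thm-d} is assumed, there is no substantial obstacle: the work is the bookkeeping of this limit, and the one point requiring care is the regularity of $R(q)$ at $q=1$ after $c=-1$, secured by the coprimality check above.
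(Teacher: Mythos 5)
Your proposal is correct and follows exactly the paper's route: the paper derives Corollary~\ref{cor-g} by the same specialization $n=p^s$, $c\to-1$, $q\to1$ in Theorem~\ref{thm-d}, stating this in one sentence without the bookkeeping. Your limit computations and the remark on regularity of the quotient at $q=1$ simply supply the details the paper omits.
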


Setting $n=p^s$ and taking $q\to 1$ in Theorem \ref{thm-e}, we
arrive at the formula.
\begin{cor}\label{cor-h}
Let $d$, $s$ be positive integers and $p$ an odd prime such that
$p^s+r\geq d\geq 3$, $\gcd(p,d)=1$, and $p^s\equiv-r\pmod{d}$. Then,
modulo $p^{s+5}$,
\begin{align*}
&\sum_{k=0}^{m}(2dk+r)\frac{(r/d)_k^6}{k!^6}
\equiv\frac{(2r/d)_{(dp^s-p^s-r)/d}}{(1)_{(dp^s-p^s-r)/d}}
\sum_{k=0}^{(dp^s-p^s-r)/d}\frac{(r/d)_k^3(1-r/d)_k}{k!^3(2r/d)_k}\\[5pt]
&\quad\times\:
\bigg\{(d-1)p^s-(d-1)^3p^{3s}\sum_{j=1}^{k}\bigg(\frac{1}{(dj)^2}+\frac{1}{(dj-d+r)^2}\bigg)\bigg\},
\end{align*}
where $r=\pm1$ and $m=(dp^s-p^s-r)/d$ or $p^s-1$.
\end{cor}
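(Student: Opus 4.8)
The plan is to deduce Corollary~\ref{cor-h} from Theorem~\ref{thm-e} by the same specialisation used for Corollaries~\ref{corl-a}--\ref{cor-c}: set $n=p^s$ and let $q\to1$. All the substance lies in checking that this passage is legitimate and that every factor tends to the asserted shape.

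First I would observe that with $n=p^s$ the hypotheses of Theorem~\ref{thm-e} hold verbatim: $n+r=p^s+r\geq d\geq3$, $\gcd(n,d)=\gcd(p^s,d)=1$ because $\gcd(p,d)=1$, $n\equiv-r\pmod d$, and $r=\pm1$. Hence Theorem~\ref{thm-e} yields a congruence modulo $[p^s]\,\Phi_{p^s}(q)^5$. Evaluating at $q=1$ and using $[p^s]\big|_{q=1}=p^s$ and $\Phi_{p^s}(1)=p$, the modulus becomes $p^{s+5}$, as required; the two permitted upper limits $m=(dp^s-p^s-r)/d$ and $m=p^s-1$ are the two values of $M$ in Theorem~\ref{thm-e}, while the inner sum always runs to $(dp^s-p^s-r)/d$.

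Next I would take $q\to1$ term by term, using $[N]\big|_{q=1}=N$, $\dfrac{(q^a;q^d)_k}{(q^d;q^d)_k}\longrightarrow\prod_{\ell=0}^{k-1}\dfrac{a+d\ell}{d(\ell+1)}=\dfrac{(a/d)_k}{k!}$, and $\dfrac{q^{N}}{[N]^2}\longrightarrow N^{-2}$. The left side of Theorem~\ref{thm-e} becomes $\sum_{k=0}^{m}(2dk+r)\dfrac{(r/d)_k^6}{k!^6}$. On the right side, $r(r+n-dn)/d$ is an integer (because $n\equiv-r\pmod d$), so $q^{r(r+n-dn)/d}\to1$; $[dn-n]\to(d-1)p^s$; $\dfrac{(q^{2r};q^d)_{(dp^s-p^s-r)/d}}{(q^d;q^d)_{(dp^s-p^s-r)/d}}\to\dfrac{(2r/d)_{(dp^s-p^s-r)/d}}{(1)_{(dp^s-p^s-r)/d}}$; the inner summand $\dfrac{(q^r;q^d)_k^3(q^{d-r};q^d)_kq^{dk}}{(q^d;q^d)_k^3(q^{2r};q^d)_k}\to\dfrac{(r/d)_k^3(1-r/d)_k}{k!^3(2r/d)_k}$ (since $(d-r)/d=1-r/d$); and $[dn-n]^2(2-q^{dn-n})\to(d-1)^2p^{2s}$, while $\dfrac{q^{dj}}{[dj]^2}+\dfrac{q^{dj-d+r}}{[dj-d+r]^2}\to\dfrac1{(dj)^2}+\dfrac1{(dj-d+r)^2}$. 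Multiplying the outer $[dn-n]$ into the brace gives $(d-1)p^s-(d-1)^3p^{3s}\sum_{j=1}^{k}\bigl(\tfrac1{(dj)^2}+\tfrac1{(dj-d+r)^2}\bigr)$, exactly the factor in Corollary~\ref{cor-h}.

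The one step that requires genuine care, and the place an objection could be raised, is the legitimacy of the limit $q\to1$ at the level of the congruence: writing the identity of Theorem~\ref{thm-e} as $F(q)-G(q)=[n]\,\Phi_n(q)^5\,H(q)$, one must check that the rational function $H$ is regular at $q=1$ and that $H(1)\in\mathbb Z_p$, so that $F(1)-G(1)=p^{s+5}H(1)$ is genuinely divisible by $p^{s+5}$. This reduces to controlling the denominators $(q^d;q^d)_k$, $(q^{2r};q^d)_k$, $[dj]$ and $[dj-d+r]$ at $q=1$: because $\gcd(p,d)=1$, the integers $dj$ and $dj-d+r$ are divisible by $p$ only for $j$ lying in one residue class modulo $p$, and every such contribution is weighed by $[dn-n]^3\to(d-1)^3p^{3s}$ (one power from the outer prefactor and two from $[dn-n]^2$), which more than compensates; the $q$-factorial quotients pass to binomial-type quotients such as $\dfrac{(r/d)_k^3(1-r/d)_k}{k!^3(2r/d)_k}$, whose $p$-adic integrality over the range $0\leq k\leq(dp^s-p^s-r)/d<p^s$ follows from a Kummer/Legendre valuation count. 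With this integrality in hand, specialising $n=p^s$ and letting $q\to1$ in Theorem~\ref{thm-e} produces Corollary~\ref{cor-h}.
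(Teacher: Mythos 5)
Your proposal is correct and follows exactly the paper's route: the paper obtains Corollary \ref{cor-h} precisely by setting $n=p^s$ and letting $q\to1$ in Theorem \ref{thm-e}, and your term-by-term verification of the limits (including the modulus $[p^s]\Phi_{p^s}(q)^5\to p^{s+5}$ and the combination of the prefactor $[dn-n]$ with the brace) matches the asserted statement. The additional care you take about the legitimacy of passing to the limit in a polynomial congruence is a standard point that the paper leaves implicit.
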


To achieve the goal of proving Theorems \ref{thm-d} and \ref{thm-e},
we shall establish the following proposition above all.

\begin{prop}\label{prop-aaa}
Let $n$, $d$ be positive integers and $r$ an integer
 such that $d+tn-dn\leq r\leq tn$,
$\gcd(n,d)=1$, and $tn\equiv r\pmod{d}$. Then, modulo
$(1-aq^{tn})(a-q^{tn})(1-bq^{tn})(b-q^{tn})$,
\begin{align}
&\sum_{k=0}^{T}[2dk+r]\frac{(aq^r,q^r/a,bq^r,q^r/b,cq^r,q^r;q^d)_k}{(q^d/a,aq^d,q^{d}/b,bq^d,q^d/c,q^d;q^d)_k}\bigg(\frac{q^{2d-3r}}{c}\bigg)^k
\notag
\\[5pt]
&\quad\equiv\,[tn](cq^r)^{(r-tn)/d}\frac{(cq^{2r};q^d)_{(tn-r)/d}}{(q^d/c;q^d)_{(tn-1)/d}}
\notag\\[5pt]
&\quad\times\,\bigg\{\frac{(1-bq^{tn})(b-q^{tn})(-1-a^2+aq^{tn})}{(a-b)(1-ab)}
\sum_{k=0}^{(tn-r)/d}\frac{(aq^r,q^r/a,cq^r,q^{d-r};q^d)_k}{(bq^d,q^d/b,cq^{2r},q^d;q^d)_k}q^{dk}
\notag\\[5pt]
&\quad\quad+\:\frac{(1-aq^{tn})(a-q^{tn})(-1-b^2+bq^{tn})}{(b-a)(1-ba)}
\sum_{k=0}^{(tn-r)/d}\frac{(bq^r,q^r/b,cq^r,q^{d-r};q^d)_k}{(aq^d,q^d/a,cq^{2r},q^d;q^d)_k}q^{dk}\bigg\},
\label{eq:wei-aaa}
\end{align}
where $T=(tn-r)/d$ or $n-1$ and  $t\in\{1,d-1\}$.
\end{prop}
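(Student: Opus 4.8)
The plan is to follow the same two-stage pattern used for Propositions~\ref{prop-c} and \ref{prop-aa}. First I would pin down the congruence \eqref{eq:wei-aaa} modulo each of the two coprime quadratic factors $(1-aq^{tn})(a-q^{tn})$ and $(1-bq^{tn})(b-q^{tn})$ separately, by specializing a parameter; then I would glue the two pieces together by the Chinese remainder theorem for coprime polynomials. Everything below is carried out uniformly in the quantity $tn$, so the two cases $t=1$ ($n\equiv r\pmod d$) and $t=d-1$ ($n\equiv -r\pmod d$) need no separate treatment.

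For the first factor I would put $a=q^{-tn}$ or $a=q^{tn}$ into the left-hand side of \eqref{eq:wei-aaa}. The hypotheses $\gcd(n,d)=1$, $tn\equiv r\pmod d$ and $d+tn-dn\le r\le tn$ guarantee that $N:=(tn-r)/d$ is an integer with $0\le N\le n-1$, and they make one of the numerator factors $(aq^r;q^d)_k$ or $(q^r/a;q^d)_k$ equal to $((q^d)^{-N};q^d)_k$, which vanishes for $k>N$, so the sum terminates. Using $\frac{1-q^{2dk+r}}{1-q}=[r]\,\frac{1-q^{r}(q^d)^{2k}}{1-q^{r}}$ I would rewrite this terminating sum as $[r]$ times a very-well-poised $_8\phi_7$ series in base $q^d$ with well-poised parameter $\alpha=q^r$, four free parameters $\beta,\gamma,\delta,\epsilon$ taken from $\{aq^r,bq^r,q^r/b,cq^r\}$, terminating parameter $(q^d)^{-N}$, and argument $q^{2d-3r}/c$, which one checks equals the balanced value $\alpha^2(q^d)^{N+2}/(\beta\gamma\delta\epsilon)$ required by Watson's $_8\phi_7$ transformation formula.

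Next I would apply Watson's transformation, pairing the free parameters so that $\{bq^r,q^r/b\}$ take the roles of $\beta,\gamma$ (whence $\alpha q^d/(\beta\gamma)=q^{d-r}$) and the terminating parameter together with $cq^r$ fill the roles of $\delta,\epsilon$. Then the $_8\phi_7$ becomes the product $[tn](cq^r)^{(r-tn)/d}\,(cq^{2r};q^d)_N/(q^d/c;q^d)_N$ times the terminating $_4\phi_3$ given by $\sum_{k=0}^{N}(aq^r,q^r/a,cq^r,q^{d-r};q^d)_k\,q^{dk}/(bq^d,q^d/b,cq^{2r},q^d;q^d)_k$, both evaluated at the chosen specialization of $a$; this is exactly the first summand on the right-hand side of \eqref{eq:wei-aaa} after that specialization. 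Hence \eqref{eq:wei-aaa} holds modulo $1-aq^{tn}$ and modulo $a-q^{tn}$, and since these polynomials are coprime it holds modulo their product. Interchanging $a$ and $b$ yields the companion congruence modulo $(1-bq^{tn})(b-q^{tn})$.

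Finally, because $(1-aq^{tn})(a-q^{tn})$ and $(1-bq^{tn})(b-q^{tn})$ are coprime, I would combine the two congruences by the Chinese remainder theorem for coprime polynomials: applying \eqref{relation-aa} and \eqref{relation-bb} with $n$ replaced by $tn$ shows that $(1-bq^{tn})(b-q^{tn})(-1-a^2+aq^{tn})/\big((a-b)(1-ab)\big)\equiv1$ modulo $(1-aq^{tn})(a-q^{tn})$ and is divisible by $(1-bq^{tn})(b-q^{tn})$, and symmetrically for the other coefficient, so the right-hand side of \eqref{eq:wei-aaa} reduces to the correct value modulo each quadratic factor and hence modulo the product. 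The main obstacle is the middle step: verifying that after the specialization the sum really is very-well-poised with precisely the argument demanded by Watson's formula, and that the output of Watson's transformation collapses to the stated prefactor and $_4\phi_3$ — in particular the negative power $(cq^r)^{(r-tn)/d}$ forces reversing a terminating $q^d$-Pochhammer symbol. This is the genuinely new wrinkle compared with Propositions~\ref{prop-c} and \ref{prop-aa}, where the free parameter $c$ is absent and a $_8\phi_7$ summation (Jackson's, or the $q$-analogue of Whipple's) produces a closed product in place of the inner sum.
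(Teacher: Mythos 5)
Your proposal is correct and follows essentially the same route as the paper: specialize $a=q^{\pm tn}$ so the sum terminates, recast it as a very-well-poised $_8\phi_7$ in base $q^d$, apply Watson's transformation to obtain the prefactor times the terminating $_4\phi_3$, deduce the congruence modulo $(1-aq^{tn})(a-q^{tn})$, swap $a$ and $b$, and glue via the Chinese remainder theorem using \eqref{relation-aa} and \eqref{relation-bb} with $q\mapsto q^t$. The paper's proof is exactly this argument, with no additional ideas.
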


\begin{proof}
When $a=q^{-tn}$ or $a=q^{tn}$, the left-hand side of
\eqref{eq:wei-aaa} is equal to
\begin{align}
&\sum_{k=0}^{T}[2dk+r]\frac{(q^{r-tn},q^{r+tn},bq^r,q^r/b,cq^r,q^r;q^d)_k}{(q^{d+tn},q^{d-tn},q^{d}/b,bq^d,q^d/c,q^d;q^d)_k}\bigg(\frac{q^{2d-3r}}{c}\bigg)^k
\notag\\[5pt]
&= [r]{_8\phi_7}\!\left[\begin{array}{cccccccc} q^r,&
q^{d+\frac{r}{2}},& -q^{d+\frac{r}{2}}, & bq^r,   & q^r/b, & cq^r, &
q^{r+tn}, & q^{r-tn}
\\[5pt]
  & q^{\frac{r}{2}}, & -q^{\frac{r}{2}}, & q^d/b,  & bq^d, &
  q^{d}/c, & q^{d-tn}, & q^{d+tn}
\end{array};q^d,\, \frac{q^{2d-3r}}{c}
\right].
 \label{eq:wei-bbb}
\end{align}
According to Watson's $_8\phi_7$ transformation (cf. \cite[Appendix
(III.18)]{Gasper}):
\begin{align*}
& _{8}\phi_{7}\!\left[\begin{array}{cccccccc}
a,& qa^{\frac{1}{2}},& -qa^{\frac{1}{2}}, & b,    & c,    & d,    & e,    & q^{-n} \\
  & a^{\frac{1}{2}}, & -a^{\frac{1}{2}},  & aq/b, & aq/c, & aq/d, & aq/e, & aq^{n+1}
\end{array};q,\, \frac{a^2q^{n+2}}{bcde}
\right] \\[5pt]
&\quad =\frac{(aq, aq/de;q)_{n}} {(aq/d, aq/e;q)_{n}}
\,{}_{4}\phi_{3}\!\left[\begin{array}{c}
aq/bc,\ d,\ e,\ q^{-n} \\
aq/b,\, aq/c,\, deq^{-n}/a
\end{array};q,\, q
\right],
\end{align*}
the right-hand side of \eqref{eq:wei-bbb} can be expressed as
\begin{align*}
[tn](cq^r)^{(r-tn)/d}\frac{(cq^{2r};q^d)_{(tn-r)/d}}{(q^d/c;q^d)_{(tn-1)/d}}
\sum_{k=0}^{(tn-r)/d}\frac{(q^{d-r},cq^r,q^{r+tn},q^{r-tn};q^d)_k}{(q^d,q^d/b,bq^{d},cq^{2r};q^d)_k}q^{dk}.
\end{align*}
Since $(1-aq^{tn})$ and $(a-q^{tn})$ are relatively prime
polynomials, we find the result: modulo $(1-aq^{tn})(a-q^{tn})$,
\begin{align}
&\sum_{k=0}^{T}[2dk+r]\frac{(aq^r,q^r/a,bq^r,q^r/b,cq^r,q^r;q^d)_k}{(q^d/a,aq^d,q^{d}/b,bq^d,q^d/c,q^d;q^d)_k}\bigg(\frac{q^{2d-3r}}{c}\bigg)^k
\notag\\[5pt]
&\quad\equiv\,[tn](cq^r)^{(r-tn)/d}\frac{(cq^{2r};q^d)_{(tn-r)/d}}{(q^d/c;q^d)_{(tn-1)/d}}
\sum_{k=0}^{(tn-r)/d}\frac{(aq^r,q^r/a,cq^r,q^{d-r};q^d)_k}{(bq^d,q^d/b,cq^{2r},q^d;q^d)_k}q^{dk}.
\label{eq:wei-ccc}
\end{align}
Interchanging the parameters $a$ and $b$ in \eqref{eq:wei-ccc}, we
are led to the relation: modulo $(1-bq^{tn})(b-q^{tn})$,
\begin{align}
&\sum_{k=0}^{T}[2dk+r]\frac{(aq^r,q^r/a,bq^r,q^r/b,cq^r,q^r;q^d)_k}{(q^d/a,aq^d,q^{d}/b,bq^d,q^d/c,q^d;q^d)_k}\bigg(\frac{q^{2d-3r}}{c}\bigg)^k
\notag\\[5pt]
&\quad\equiv\,[tn](cq^r)^{(r-tn)/d}\frac{(cq^{2r};q^d)_{(tn-r)/d}}{(q^d/c;q^d)_{(tn-1)/d}}
\sum_{k=0}^{(tn-r)/d}\frac{(bq^r,q^r/b,cq^r,q^{d-r};q^d)_k}{(aq^d,q^d/a,cq^{2r},q^d;q^d)_k}q^{dk}.
\label{eq:wei-ddd}
\end{align}

Employing \eqref{relation-aa} and \eqref{relation-bb} and the
Chinese remainder theorem for coprime polynomials, we can derive,
from \eqref{eq:wei-ccc} and \eqref{eq:wei-ddd}, the
 $q$-supercongruence \eqref{eq:wei-aaa}.
\end{proof}

Whereafter, we shall give the following parametric generalizations
of Theorems \ref{thm-d} and \ref{thm-e}.

\begin{thm}\label{thm-aaa}
Let $n$, $d$ be positive integers and $r$ an integer such that
 $d+n-dn\leq r\leq tn$, $\gcd(n,d)=1$, and $n\equiv
r\pmod{d}$. Then, modulo
$[n](1-aq^{n})(a-q^{n})(1-bq^{n})(b-q^{n})$,
\begin{align*}
&\sum_{k=0}^{M}[2dk+r]\frac{(aq^r,q^r/a,bq^r,q^r/b,cq^r,q^r;q^d)_k}{(q^d/a,aq^d,q^{d}/b,bq^d,q^d/c,q^d;q^d)_k}\bigg(\frac{q^{2d-3r}}{c}\bigg)^k
\notag\\[5pt]
&\quad\equiv\,[n](cq^r)^{(r-n)/d}\frac{(cq^{2r};q^d)_{(n-r)/d}}{(q^d/c;q^d)_{(n-r)/d}}
\notag\\[5pt]
&\quad\times\,\bigg\{\frac{(1-bq^{n})(b-q^{n})(-1-a^2+aq^{n})}{(a-b)(1-ab)}
\sum_{k=0}^{(n-r)/d}\frac{(aq^r,q^r/a,cq^r,q^{d-r};q^d)_k}{(bq^d,q^d/b,cq^{2r},q^d;q^d)_k}q^{dk}
\notag\\[5pt]
&\quad\quad+\:\frac{(1-aq^{n})(a-q^{n})(-1-b^2+bq^{n})}{(b-a)(1-ba)}
\sum_{k=0}^{(n-r)/d}\frac{(bq^r,q^r/b,cq^r,q^{d-r};q^d)_k}{(aq^d,q^d/a,cq^{2r},q^d;q^d)_k}q^{dk}\bigg\},
\end{align*}
where $M=(n-r)/d$ or $n-1$.
\end{thm}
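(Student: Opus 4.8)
The plan is to prove Theorem~\ref{thm-aaa} by exactly the same mechanism that produced Theorems~\ref{thm-f}, \ref{thm-aa} and \ref{thm-bb}: take the congruence modulo $(1-aq^{n})(a-q^{n})(1-bq^{n})(b-q^{n})$ that Proposition~\ref{prop-aaa} already supplies, combine it with the vanishing of the left-hand side modulo $[n]$, and splice the two pieces together with the Chinese remainder theorem for coprime polynomials.

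First I would apply Proposition~\ref{prop-aaa} in the case $t=1$. Under the hypotheses of Theorem~\ref{thm-aaa} we have $tn=n\equiv r\pmod d$ together with the required bounds on $r$, so Proposition~\ref{prop-aaa} yields precisely the right-hand side displayed in Theorem~\ref{thm-aaa}, valid modulo $(1-aq^{n})(a-q^{n})(1-bq^{n})(b-q^{n})$, and it does so for each of the two admissible values $M=(n-r)/d$ and $M=n-1$.

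Second I would establish that
\[
\sum_{k=0}^{M}[2dk+r]\frac{(aq^r,q^r/a,bq^r,q^r/b,cq^r,q^r;q^d)_k}{(q^d/a,aq^d,q^{d}/b,bq^d,q^d/c,q^d;q^d)_k}\bigg(\frac{q^{2d-3r}}{c}\bigg)^k\equiv 0\pmod{[n]}.
\]
This is the specialization $c\mapsto 1/c$, $\mu=(n-r)/d$ of the Ni--Wang identities \eqref{eq:NW-a} and \eqref{eq:NW-b}: the congruence $n\equiv r\pmod d$ makes $\mu=(n-r)/d$ a nonnegative integer, the stated bounds $d+n-dn\le r\le n$ give $0\le\mu\le n-1$, and $d\mu=n-r\equiv -r\pmod n$ is exactly the divisibility condition required there. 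The choice $M=(n-r)/d$ is covered by \eqref{eq:NW-a} and the choice $M=n-1$ by \eqref{eq:NW-b}.

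Finally, because $[n]$ and $(1-aq^{n})(a-q^{n})(1-bq^{n})(b-q^{n})$ are relatively prime as polynomials, the two congruences from the previous steps can be merged, via the Chinese remainder theorem for coprime polynomials, into the single congruence modulo $[n](1-aq^{n})(a-q^{n})(1-bq^{n})(b-q^{n})$ asserted in the theorem. I do not anticipate a genuine obstacle here: all of the hypergeometric input — in particular Watson's $_8\phi_7$ transformation and the Chinese remainder bookkeeping based on \eqref{relation-aa} and \eqref{relation-bb} — has already been carried out inside Proposition~\ref{prop-aaa}, so the only step that demands real care is matching the parameters of the Ni--Wang identities, namely performing the inversion $c\mapsto 1/c$ correctly and verifying $d\mu\equiv-r\pmod n$, so that the ``$\equiv 0\pmod{[n]}$'' statement is legitimately available for both values of $M$.
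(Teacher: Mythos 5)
Your proposal is correct and follows essentially the same route as the paper: the paper's proof of Theorem~\ref{thm-aaa} likewise combines the $t=1$ case of Proposition~\ref{prop-aaa} with the Ni--Wang congruences \eqref{eq:NW-a} and \eqref{eq:NW-b} and concludes by the coprimality of $[n]$ with $(1-aq^{n})(a-q^{n})(1-bq^{n})(b-q^{n})$. Your explicit check of the substitution $c\mapsto 1/c$ and of the condition $d\mu\equiv-r\pmod{n}$ for $\mu=(n-r)/d$ is a detail the paper leaves implicit, and it is carried out correctly.
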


\begin{proof}
Because $(1-aq^{n})(a-q^{n})(1-bq^{n})(b-q^{n})$ and $[n]$ are
relatively prime polynomials, we can prove Theorem \ref{thm-aaa}
through \eqref{eq:NW-a}, \eqref{eq:NW-b}, and the $t=1$ case of
Proposition \ref{prop-aaa}.
\end{proof}

\begin{thm}\label{thm-bbb}
Let $n$, $d$ be integers such that $n+r\geq d\geq3$, $\gcd(n,d)=1$,
and $n\equiv -r\pmod{d}$. Then, modulo
$[n]\Phi_n(q)(1-aq^{dn-n})(a-q^{dn-n})(1-bq^{dn-n})(b-q^{dn-n})$,
\begin{align*}
&\:\:\sum_{k=0}^{M}[2dk+r]\frac{(aq^r,q^r/a,bq^r,q^r/b;q^d)_k(q^r;q^d)_k^2}{(q^d/a,aq^d,q^{d}/b,bq^d;q^d)_k(q^d;q^d)_k^2}q^{(2d-3r)k}
\notag\\[5pt]
&\quad\equiv\,[dn-n]q^{r(r+n-dn)/d}\frac{(q^{2r};q^d)_{(dn-n-r)/d}}{(q^d;q^d)_{(dn-n-1)/d}}
\notag\\[5pt]
&\quad\times\,\bigg\{\frac{(1-bq^{dn-n})(b-q^{dn-n})(-1-a^2+aq^{dn-n})}{(a-b)(1-ab)}
\sum_{k=0}^{(dn-n-r)/d}\!\frac{(aq^r,q^r/a,q^r,q^{d-r};q^d)_k}{(bq^d,q^d/b,q^{2r},q^d;q^d)_k}q^{dk}
\notag\\[5pt]
&\quad\quad+\:\frac{(1-aq^{dn-n})(a-q^{dn-n})(-1-b^2+bq^{dn-n})}{(b-a)(1-ba)}
\sum_{k=0}^{(dn-n-r)/d}\!\frac{(bq^r,q^r/b,q^r,q^{d-r};q^d)_k}{(aq^d,q^d/a,q^{2r},q^d;q^d)_k}q^{dk}\bigg\},
\end{align*}
where $r=\pm1$ and $M=(dn-n-r)/d$ or $n-1$.
\end{thm}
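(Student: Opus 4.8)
The plan is to mirror the proof of Proposition \ref{prop-aaa} (the $t=d-1$ case) but upgrade the modulus from $[n]\Phi_n(q)$ to $[n]\Phi_n(q)$ combined with the four-factor polynomial, exactly as Theorem \ref{thm-bb} was derived from Proposition \ref{prop-aa}. First I would invoke Proposition \ref{prop-aaa} with $t=d-1$, which (since $n\equiv -r\pmod d$ forces $tn=(d-1)n\equiv r\pmod d$) gives the stated right-hand side as a congruence modulo $(1-aq^{dn-n})(a-q^{dn-n})(1-bq^{dn-n})(b-q^{dn-n})$. It then remains to show the same congruence holds modulo $[n]\Phi_n(q)$, and to combine the two via the fact that these two polynomials are coprime.

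For the $[n]\Phi_n(q)$ part I would appeal to \eqref{eq:NW}: specializing it with the given $d$, $r=\pm1$, and $\nu=(dn-n-r)/d$ or $n-1$ (the hypotheses $n+r\geq d$, $\gcd(n,d)=1$, $n\equiv -r\pmod d$ are precisely what \eqref{eq:NW} requires after noting $dn-n-n$ plays the role there), one obtains that the left-hand sum is $\equiv 0\pmod{[n]\Phi_n(q)}$. So it suffices to check that the right-hand side of the theorem is also $\equiv 0 \pmod{[n]\Phi_n(q)}$. Here I would use the same device as in the proof of Theorem \ref{thm-bb}: since $n\equiv -r\pmod d$, the index $(dn-n-r)/d$ is large enough that the product $(q^{2r};q^d)_{(dn-n-r)/d}$ contains the factor $(1-q^{tn}\cdot q^{\text{something}})$ — more precisely one shows $(1-q^n)$ divides $(q^{2r};q^d)_{(dn-n-r)/d}$ — and then, following the argument used to prove Guo \cite[Lemma 1]{Guo-R}, the prefactor $[dn-n]\,(q^{2r};q^d)_{(dn-n-r)/d}/(q^d;q^d)_{(dn-n-1)/d}$ is divisible by $[n]\Phi_n(q)$. (The two finite $_4\phi_3$-type inner sums are polynomials in $q$ with coefficients in $\mathbb{Z}[a^{\pm1},b^{\pm1}]$, so they do not interfere with divisibility.) This gives the $[n]\Phi_n(q)$ congruence.

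Finally I would note that $(1-aq^{dn-n})(a-q^{dn-n})(1-bq^{dn-n})(b-q^{dn-n})$ and $[n]\Phi_n(q)$ are relatively prime as polynomials in $q$ (their roots are $a^{\pm1}$-shifted roots of unity of order dividing $(dn-n)$ versus genuine $n$-th roots of unity, and in any case $a,b$ are free parameters), so the Chinese remainder theorem for coprime polynomials lets me glue the two congruences into the single congruence modulo $[n]\Phi_n(q)(1-aq^{dn-n})(a-q^{dn-n})(1-bq^{dn-n})(b-q^{dn-n})$ claimed in the theorem. The main obstacle, as in Theorem \ref{thm-bb}, is the divisibility bookkeeping in the second step: one must carefully verify that the extra factor $\Phi_n(q)$ (beyond $[n]$) is actually supplied by the $q$-shifted factorial in the prefactor, which requires pinning down exactly which index range of $(q^{2r};q^d)_{(dn-n-r)/d}$ hits a multiple of $q^n-1$; this is the step where the hypothesis $n\equiv -r\pmod d$ (rather than merely $\gcd(n,d)=1$) is essential, and it is the only place where genuine care is needed — the rest is a routine transcription of the $t=d-1$ machinery already set up.
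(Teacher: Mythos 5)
Your proposal is correct and follows essentially the same route as the paper: the $c=1$, $r=\pm1$, $t=d-1$ case of Proposition \ref{prop-aaa} for the four-factor modulus, the congruence \eqref{eq:NW} for the $[n]\Phi_n(q)$ part, and the Chinese remainder theorem for coprime polynomials to glue them. The divisibility of the right-hand side by $[n]\Phi_n(q)$, which you correctly flag as the one point needing care (and where $n+r\geq d$ and $n\equiv-r\pmod d$ enter), is exactly the step the paper handles by the remark ``some similar discuss will be omitted'' following the proof of Theorem \ref{thm-bb}.
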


\begin{proof}
Since $(1-aq^{dn-n})(a-q^{dn-n})(1-bq^{dn-n})(b-q^{dn-n})$ and
$[n]\Phi_n(q)$ are relatively prime polynomials, we can establish
Theorem \ref{thm-bbb} via \eqref{eq:NW} and the $c=1, r=\pm1, t=d-1$
case of Proposition \ref{prop-aaa}.
\end{proof}

Now we prepare to prove Theorems \ref{thm-d} and \ref{thm-e}.

\begin{proof}[Proof of Theorem \ref{thm-d}]
Letting $b\to1$ in Theorem \ref{thm-aaa}, we obtain the conclusion:
modulo $[n]\Phi_n(q)^2(1-aq^{n})(a-q^{n})$,
\begin{align}
&\sum_{k=0}^{M}[2dk+r]\frac{(aq^r,q^r/a,cq^r;q^d)_k(q^r;q^d)_k^3}{(q^d/a,aq^d,q^d/c;q^d)_k(q^d;q^d)_k^3}\bigg(\frac{q^{2d-3r}}{c}\bigg)^k
\notag\\[5pt]
&\quad\equiv\,[n](cq^r)^{(r-n)/d}\frac{(cq^{2r};q^d)_{(n-r)/d}}{(q^d/c;q^d)_{(n-r)/d}}
\notag\\[5pt]
&\quad\times\,\bigg\{(1-q^{n})^2
\sum_{k=0}^{(n-r)/d}\frac{(aq^r,q^r/a,cq^r,q^{d-r};q^d)_k}{(q^d;q^d)_k^3(cq^{2r};q^d)_k}q^{dk}
\notag\\[5pt]
&\quad\quad+\:\frac{a(1-q^{n})^2(2-q^{n})}{(1-a)^2}
\sum_{k=0}^{(n-r)/d}\frac{(aq^r,q^r/a,cq^r,q^{d-r};q^d)_k}{(q^d;q^d)_k^3(cq^{2r};q^d)_k}q^{dk}
\notag\\[5pt]
&\quad\quad-\:\frac{(1-aq^{n})(a-q^{n})(2-q^{n})}{(1-a)^2}
\sum_{k=0}^{(n-r)/d}\frac{(q^r;q^d)_k^2(cq^r,q^{d-r};q^d)_k}{(aq^d,q^d/a,cq^{2r},q^d;q^d)_k}q^{dk}\bigg\}.
\label{eq:wei-eee}
\end{align}
 By the L'H\^{o}pital rule, we have
\begin{align*}
&\lim_{a\to1}\bigg\{\frac{a(1-q^{n})^2}{(1-a)^2}
\sum_{k=0}^{(n-r)/d}\frac{(aq^r,q^r/a,cq^r,q^{d-r};q^d)_k}{(q^d;q^d)_k^3(cq^{2r};q^d)_k}q^{dk}\\[5pt]
&\qquad-\frac{(1-aq^{n})(a-q^{n})}{(1-a)^2}
\sum_{k=0}^{(n-r)/d}\frac{(q^r;q^d)_k^2(cq^r,q^{d-r};q^d)_k}{(aq^d,q^d/a,cq^{2r},q^d;q^d)_k}q^{dk}\bigg\}
\\[5pt]
&\:\:=\sum_{k=0}^{(n-r)/d}\frac{(q^r;q^d)_k^2(q^{d-r},cq^r;q^d)_k}{(q^d;q^d)_k^3(cq^{2r};q^d)_k}q^{dk}
\\[5pt]
&\qquad\times\bigg\{q^{n}-[n]^2\sum_{j=1}^{k}\bigg(\frac{q^{dj}}{[dj]^2}+\frac{q^{dj-d+r}}{[dj-d+r]^2}\bigg)\bigg\}.
\end{align*}
Letting $a\to1$ in \eqref{eq:wei-eee} and using the above limit, we
get Theorem \ref{thm-d}.
\end{proof}

\begin{proof}[Proof of Theorem \ref{thm-e}]
Letting $b\to1$ in Theorem \ref{thm-bbb}, we discover the formula:
modulo $[n]\Phi_n(q)^3 (1-aq^{dn-n})(a-q^{dn-n})$,
\begin{align}
&\sum_{k=0}^{M}[2dk+r]\frac{(aq^r,q^r/a;q^d)_k(q^r;q^d)_k^4}{(q^d/a,aq^d;q^d)_k(q^d;q^d)_k^4}q^{(2d-3r)k}
\notag\\[5pt]
&\quad\equiv\,[dn-n]q^{r(r+n-dn)/d}\frac{(q^{2r};q^d)_{(dn-n-r)/d}}{(q^d;q^d)_{(dn-n-r)/d}}
\notag\\[5pt]
&\quad\times\,\bigg\{(1-q^{dn-n})^2
\sum_{k=0}^{(dn-n-r)/d}\frac{(aq^r,q^r/a,q^r,q^{d-r};q^d)_k}{(q^d;q^d)_k^3(q^{2r};q^d)_k}q^{dk}
\notag\\[5pt]
&\quad\quad+\:\frac{a(1-q^{dn-n})^2(2-q^{dn-n})}{(1-a)^2}
\sum_{k=0}^{(dn-n-r)/d}\frac{(aq^r,q^r/a,q^r,q^{d-r};q^d)_k}{(q^d;q^d)_k^3(q^{2r};q^d)_k}q^{dk}
\notag\\[5pt]
&\quad\quad-\:\frac{(1-aq^{dn-n})(a-q^{dn-n})(2-q^{dn-n})}{(1-a)^2}
\sum_{k=0}^{(dn-n-r)/d}\frac{(q^r;q^d)_k^3(q^{d-r};q^d)_k}{(aq^d,q^d/a,q^{2r},q^d;q^d)_k}q^{dk}\bigg\}.
\label{eq:wei-fff}
\end{align}
 By the L'H\^{o}pital rule, we arrive at
\begin{align*}
&\lim_{a\to1}\bigg\{\frac{a(1-q^{dn-n})^2}{(1-a)^2}
\sum_{k=0}^{(dn-n-r)/d}\frac{(aq^r,q^r/a,q^r,q^{d-r};q^d)_k}{(q^d;q^d)_k^3(q^{2r};q^d)_k}q^{dk}
\\[5pt]
&\qquad-\frac{(1-aq^{dn-n})(a-q^{dn-n})}{(1-a)^2}
\sum_{k=0}^{(dn-n-r)/d}\frac{(q^r;q^d)_k^3(q^{d-r};q^d)_k}{(aq^d,q^d/a,q^{2r},q^d;q^d)_k}q^{dk}\bigg\}
\end{align*}
\begin{align*}
&\:\:=\sum_{k=0}^{(dn-n-r)/d}\frac{(q^r;q^d)_k^3(q^{d-r};q^d)_k}{(q^d;q^d)_k^3(q^{2r};q^d)_k}q^{dk}\\[5pt]
&\qquad\times\bigg\{q^{dn-n}-[dn-n]^2\sum_{j=1}^{k}\bigg(\frac{q^{dj}}{[dj]^2}+\frac{q^{dj-d+r}}{[dj-d+r]^2}\bigg)\bigg\}.
\end{align*}
Letting $a\to1$ in \eqref{eq:wei-fff} and utilizing the upper limit,
we are led to Theorem \ref{thm-e}.
\end{proof}



\begin{thebibliography}{99}
\small \setlength{\itemsep}{-0.8mm}

\bibitem{Gasper}G. Gasper, M. Rahman, Basic Hypergeometric Series (2nd edition),
Cambridge University Press, Cambridge, 2004.

\bibitem{Guo-R}V.J.W. Guo, $q$-Analogues of the (E.2) and (F.2) supercongruences of Van Hamme,
Ramanujan. J. 49 (2019), 531--544.

\bibitem{GLS} V.J.W. Guo, J.-C. Liu, M.J. Schlosser, An extension of a supercongruence of Long and Ramakrishna,
 Proc. Amer. Math. Soc. 151 (2023), 1157--1166.

\bibitem{GS20c}
V.J.W. Guo, M.J. Schlosser, A family of $q$-hypergeometric
congruences modulo the fourth power of a cyclotomic polynomial,
Israel J. Math. 240 (2020), 821--835.

\bibitem{GS}V.J.W. Guo, M.J. Schlosser, Some $q$-supercongruences from transformation formulas for basic
hypergeometric series, Constr. Approx. 53 (2021), 155--200.

\bibitem{GuoZu-a}V.J.W. Guo, W. Zudilin, A $q$-microscope for supercongruences, Adv. Math. 346 (2019), 329--358.

\bibitem{GuoZu-b}V.J.W. Guo, W. Zudilin, Dwork-type supercongruences through a creative $q$-microscope,
 J. Combin. Theory, Ser. A 178 (2021), Art. 105362.
\bibitem{LW}L. Li, S.-D. Wang, Proof of a $q$-supercongruence conjectured by Guo and Schlosser,
Rev. R. Acad. Cienc. Exactas F\'is. Nat., Ser. A Mat. 114 (2020),
Art. 190.

\bibitem{Liu}J-C. Liu, On Van Hamme's (A.2) and (H.2) supercongruences, J.
Math. Anal. Appl. 471 (2019), 613--622.

\bibitem{LP}J.-C. Liu, F. Petrov, Congruences on sums of $q$-binomial coefficients, Adv. Appl. Math. 116 (2020), Art.~102003.

\bibitem{LR} L. Long, R. Ramakrishna, Some supercongruences occurring in truncated hypergeometric series, Adv. Math. 290 (2016), 773--808.

\bibitem{NW}H.-X. Ni, L.-Y. Wang, Two $q$-supercongruences from Watson's transformation,
Rev. R. Acad. Cienc. Exactas F\'is. Nat., Ser. A Mat. 116 (2022),
Art. 30.

\bibitem{Robert} A.M. Robert, A Course in $p$-Adic Analysis, Graduate Texts in Mathematics, Springer-Verlag, New York, 2000.

\bibitem{Tauraso}R. Tauraso, $q$-Analogs of some congruences involving Catalan numbers, Adv. Appl. Math. 48 (2009), 603--614.

\bibitem{Hamme} L. Van Hamme, Some conjectures concerning partial sums of
generalized hypergeometric series, in: p-Adic Functional Analysis
(Nijmegen, 1996), Lecture Notes in Pure and Appl. Math. 192, Dekker,
New York, 1997, pp. 223--236.

\bibitem{WY-a}X. Wang, M. Yue, Some $q$-supercongruences from Watson's
$_8\phi_7$ transformation formula, Results Math. 75 (2020), Art.~71.

\bibitem{Wei}C. Wei, Some $q$-supercongruences modulo the fourth power of a cyclotomic polynomial, J. Combin. Theory, Ser. A 182 (2021), Art. 105469.

\bibitem{Zu19}W. Zudilin, Congruences for $q$-binomial coefficients, Ann. Combin. 23 (2019), 1123--1135.
\end{thebibliography}
\end{document}